\numberwithin{equation}{section}
\patchcmd{\thesubsection}{\arabic}{\arabic}{}{}
\patchcmd{\@seccntformat}{\@secnumfont}{%
  \@secnumfont\expandafter\protect\csname format#1\endcsname}{}{}
\patchcmd{\@startsection}{\@afterindenttrue}{\@afterindentfalse}{}{}
\patchcmd{\subsection}{-.5em}{.3\linespacing}{}{}
\theoremstyle{plain}
\newtheorem{theorem}{Theorem}[section]
\newtheorem{proposition}[theorem]{Proposition}
\newtheorem{lemma}[theorem]{Lemma}
\newtheorem{question}[theorem]{Question}
\newtheorem{corollary}[theorem]{Corollary}
\theoremstyle{remark}
\newtheorem{remark}[theorem]{Remark}
\newcommand{\Ker}[1]{\ensuremath{\mathrm{Ker} (#1)}}
\newcommand{\SKer}[1]{\ensuremath{\mathcal{K}er (#1)}}
\newcommand{\Aut}[1]{\ensuremath{\mathrm{Aut} (#1)}}
\newcommand{\Pic}[1]{\ensuremath{\mathrm{Pic} (#1)}}
\newcommand{\At}[1]{\ensuremath{\mathrm{At} (#1)}}
\newcommand{\ENd}[1]{\ensuremath{\mathrm{End}  (#1)}}
\newcommand{\ParENd}[1]{\ensuremath{\mathrm{ParEnd}  (#1)}}
\newcommand{\SParENd}[1]{\ensuremath{\mathrm{SParEnd}  (#1)}}
\newcommand{\Parend}[1]{\ensuremath{\mathrm{ParEnd'}  (#1)}}
\newcommand{\SParend}[1]{\ensuremath{\mathrm{SParEnd'}  (#1)}}
\newcommand{\Img}[1]{\ensuremath{\mathrm{Im} (#1)}}
\newcommand{\cat}[1]{\ensuremath{\mathcal{#1}}}
\newcommand{\codim}[1]{\ensuremath{\mathrm{codim}(#1)}}
\newcommand{\End}[2][]{\ensuremath{\mathrm{End}_{#1} (#2)}}
\newcommand{\END}[2][]{\ensuremath{\mathcal{E}\mathit{nd}_{#1} (#2)}}
\newcommand{\id}[1]{\ensuremath{\mathbf{1}_{#1}}}
\renewcommand{\dim}[2][]{\ensuremath{\mathrm{dim}_{#1}(#2)}}
\newcommand{\rk}[2][]{\ensuremath{\mathrm{rk}_{#1}(#2)}}
\newcommand{\Z}{\ensuremath{\mathbb{Z}}}
\newcommand{\Q}{\ensuremath{\mathbb{Q}}}
\newcommand{\p}{\ensuremath{\mathbf{P}}}
\newcommand{\set}[1]{\ensuremath{\{ #1 \}}}
\newcommand{\C}{\ensuremath{\mathbb{C}}}
\newcommand{\tr}[1]{\ensuremath{\mathrm{Tr}(#1)}}
\newcommand{\struct}[1]{\ensuremath{\mathcal{O}_{#1}}}
\newcommand{\DIFF}[4][]{\ensuremath{\mathcal{D}\mathit{iff}^{#1}_{#2}(#3,#4)}}  
\newcommand{\Diff}[4][]{\ensuremath{\mathrm{Diff}^{#1}_{#2}(#3,#4)}}
\newcommand{\coh}[3]{\ensuremath{\mathrm{H}^{#1}(#2,#3)}}
\begin{document}

\title[Moduli space of Parabolic connections]{Line bundles on the moduli space of Parabolic connections
over a compact Riemann surface}

\author{Anoop Singh}

\address{School of Mathematics,
Tata Institute of
Fundamental Research \\
Mumbai, India, 400005}
  \email{anoops@math.tifr.res.in}

\subjclass[2020]{14D20,  14C22, 14H05}
  \keywords{Parabolic connection, Moduli space,  Compactification, Picard group}

\maketitle

\begin{abstract}
Let $X$ be a compact Riemann surface of genus $g \geq 3$ 
and $S$  a finite subset of $X$.   Let $\xi$ be fixed a holomorphic line bundle over $X$ of degree $d$. 
Let $\cat{M}_{pc}(r, d, \alpha)$ (respectively, $\cat{M}_{pc}(r, \alpha, \xi)$ ) denote the moduli space of parabolic connections 
 of rank $r$, degree $d$ and full flag rational
generic weight system $\alpha$, (respectively, with the fixed 
  determinant $\xi$ )
 singular over the parabolic points $S \subset X$. 
 Let 
 $\cat{M}'_{pc}(r, d, \alpha)$ (respectively, $\cat{M}'_{pc}(r, \alpha, \xi)$)   be the Zariski dense open subset  of $\cat{M}_{pc}(r, d, \alpha)$ (respectively, $\cat{M}_{pc}(r, \alpha, \xi)$ )parametrizing all parabolic connections such that the underlying parabolic bundle is stable. 
 We show that there is a natural compactification of the 
 moduli spaces $\cat{M}'_{pc}(r, d, \alpha)$, and 
 $\cat{M}'_{pc}(r, \alpha, \xi)$ by smooth divisors.
 We describe the numerically effectiveness of these divisors at infinity.
 We determine the Picard group of the moduli spaces $\cat{M}_{pc}(r, d, \alpha)$, and $\cat{M}_{pc}(r, \alpha, \xi)$.
 Let $\cat{C}(L)$ denote the space  of holomorphic connections
  on  an ample line bundle $L$ over the moduli space $\cat{M}(r, d, \alpha)$ of parabolic bundles. We  show that $\cat{C}(L)$ does not admit any non-constant algebraic function. 
 \end{abstract}

\section{Introduction and statements of the results}
The moduli space of parabolic $\Lambda$-modules over a smooth projective curve has been constructed in \cite{A17}, which is a generalization of the moduli space of 
$\Lambda$-modules constructed in \cite{S1}, \cite{S2} by Simpson. The moduli space of parabolic connections over a smooth projective curve 
is the moduli space of parabolic $\Lambda$-modules where 
$\Lambda$ is the sheaf of rings of differential operators. The moduli space of parabolic connections is also constructed in \cite{I}.

Let $X$ be a compact Riemann surface of genus $g \geq 3$, and $S = \{x_1, \ldots, x_m\}$ the finite subset of 
$X$, which we call the set of parabolic points.
Let $E$ be a holomorphic vector bundle over $X$ of rank 
$r$ and degree $d$. See section \ref{pre} for the definition of  parabolic weights $\alpha$ (parabolic  structure) on $E$, parabolic connections
on a parabolic vector bundle $E_*$, parabolic Higgs bundles and their moduli spaces. 
We consider the full flag rational generic parabolic weights.
Let $$\alpha = \{\alpha^x_1, \ldots, \alpha^x_r \}_{x \in S}$$ be the fixed generic system of rational parabolic weights
corresponding to the full flag filtration such that 
$$  \sum_{x \in S} \sum_{i = 1}^r \alpha_i^x \in \Z .$$

Let $\xi$ be a fixed holomorphic line bundle over $X$ of degree 
$d$ such that 
\begin{equation}
\label{eq:c13}
d = \deg{\xi} = - \sum_{x \in S} \sum_{i = 1}^r \alpha_i^x.
\end{equation}
Let $\cat{M}(r, d, \alpha)$ (respectively, $\cat{M}(r, \alpha, \xi)$) denote  the moduli space of stable parabolic vector bundles of rank $r$, degree $d$ and 
full flag rational generic system of weights $\alpha$ (respectively, with determinant $\xi$).

Let $\xi$ be given a trivial filtration at each $x \in S$ with parabolic weight 
\begin{equation*}
\label{eq:b14}
\beta^x := \sum_{i=1}^r \alpha_i^x.
\end{equation*}
We denote this parabolic line bundle by $\xi_*$.
Now, since $p\deg(\xi_*) = 0$, from \cite[Theorem 3.1]{B1},
$\xi_*$ admits a parabolic connection $D_{\xi_*}$
such that $Res(D_{\xi_*}, x) = \beta^x$ for every 
$x \in S$.

 Let $\cat{M}_{pc}(r, d, \alpha)$  (respectively, $\cat{M}_{pc}(r, \alpha, \xi)$) be the moduli space of 
 stable parabolic connections  $(E_*, D)$ of rank $r$, degree $d$
 and parabolic weight $\alpha$ (respectively, 
 with  $({\bigwedge^r E}_*, \tilde{D}) \cong (\xi_*, D_{\xi_*})$, where $\tilde{D}$ is the parabolic connection on the parabolic vector bundle ${\bigwedge^r E}_*$ induced by $D$).
 
 Let $\cat{M}'_{pc}(r, d, \alpha)$ (resp. $\cat{M}'_{pc}(r, \alpha, \xi)$) be the open dense subset of 
 $\cat{M}_{pc}(r, d, \alpha)$ (resp. $\cat{M}_{pc}(r, \alpha, \xi)$) consists of those parabolic connections whose underlying vector bundle is stable.

In \cite{LS},  
the Picard group of the moduli space of parabolic vector 
bundles has been computed.
Now using the techniques from \cite{BR}, and 
\cite{AS1}, we show the following  (see Theorem \ref{thm:1}) $$\Pic{\cat{M}_{pc}(r, d, \alpha)} \cong \Pic{\cat{M}(r, d, \alpha)}.$$

While proving the isomorphism between Picard groups, we show that there
is a natural compactification of the moduli space 
$\cat{M}'_{pc}(r, d, \alpha)$ by a smooth divisor.

Let $L$ be an ample line bundle over $\cat{M}(r, d, \alpha)$, and $\cat{C}(L)$ denote the space of all holomorphic connections on $L$. Then,
$\cat{C}(L)$ is a $T^*\cat{M}(r, d, \alpha)$-torsor,
where $T^*\cat{M}(r, d, \alpha)$ is the cotangent bundle 
of $\cat{M}(r, d, \alpha)$. 
Then, we have the following (see Proposition \ref{prop:2})
$$\Pic{\cat{C}(L)} \cong \Pic{\cat{M}(r, d, \alpha)}.$$

Also, we show that   the global regular functions on the variety $\cat{C}(L)$ are constant functions  (see Theorem \ref{thm:2}), that is,
$$\coh{0}{\cat{C}(L)}{\struct{\cat{C}(L)}} = \C.$$

Let $\At{L}$ denote the Atiyah bundle of $L$ (see \eqref{eq:a40}). In order to prove the Theorem \ref{thm:2}, 
we have shown the following 
$$\coh{0}{\cat{M}}{\cat{S}ym^k\At{L}} = \C,$$
for every $k \geq 0$, where $\cat{S}ym^k\At{L}$ denote the 
symmetric powers of the Atiyah bundle $\At{L}$.

Let $\p (\cat{V})$ be the compactification of the moduli space $\cat{M}'_{pc}(r, \alpha, \xi)$ (see Proposition \ref{prop:3}) with the complement $${\bf H}_0
  = \p (\cat{V}) \setminus \cat{M}'_{pc}(r, \alpha, \xi)$$ a smooth divisor at infinity.
  Then, we prove that ${\bf H}_0$ is numerically effective if and only if the tangent bundle 
  $T \cat{M}(r, \alpha, \xi)$ is numerically effective
  (see Proposition \ref{prop:4}).

\section{Preliminaries}
\label{pre}
 We recall the notion of parabolic vector bundles and parabolic connections on a parabolic vector bundle over a compact Riemann surface.
 Let $X$ be a compact Riemann surface of genus $g \geq 3$. Let $S = \{x_1, \ldots, x_m\} $ be a finite subset of $X$, which we call the set of parabolic points. Let 
 $E$ be a holomorphic vector bundle over $X$.
 
 A \emph{quasi-parabolic structure} on $E$ at a point $x\in X$ is a
 strictly decreasing flag 
\begin{equation*}
	\label{eq:a1}
 E_x = E^1_x \supseteq E^2_x \supseteq \dotsb \supseteq E^k_x 
  \supseteq E^{k+1}_x = 0
\end{equation*}
of linear subspaces in the fibre $E_x$.  We set
\begin{equation*}
	\label{eq:a2}
 m^x_j = \dim[\C]{E^j_x} - \dim[\C]{E^{j+1}_x}.
\end{equation*}
The integer $k$ is called the \emph{length} of the flag and the
$k$-tuple $(m^x_1, \dotsc, m^x_k)$ is called the \emph{type}
of the flag.  We say that the flag is a full flag if $m^x_j = 1$ for
all $1 \leq j \leq k$.  A \emph{parabolic structure} in $E$ at $x$ is
just a quasi-parabolic structure at $x$ together with a sequence of
real numbers $$0 \leq \alpha^x_1 < \dotsb < \alpha^x_k < 1.$$  The real
numbers $\alpha^x_j$ are called the \emph{weights}.
We denote by $\alpha = \{(\alpha_1^x, \ldots, \alpha_k^x)\}_{x \in S}$ the system of real weights 
corresponding to a fixed parabolic structure.

A \emph{parabolic vector bundle} with parabolic structure on $S$ is a
 holomorphic vector bundle $E$ together with a parabolic structure in
 $E$ at each point $x \in S$.  We shall write $E_*$ to denote the
 parabolic vector bundle with underlying vector bundle $E$.

   For a
 parabolic bundle $E_*$ with system of weights $\alpha$ the \emph{parabolic degree} is defined to be
 the real number
 \begin{equation*}
	\label{eq:a3}
  p\deg(E_*) = \deg(E) + \sum_{x \in D} 
   \sum_{j = 1}^k m^x_j \alpha^x_j,
 \end{equation*}
 where $\deg(E)$ denotes the degree of $E$, and we put
 \begin{equation*}
	\label{eq:a4}
  p\mu(E_*) = \frac{p\deg(E_*)}{\rk{E}},
 \end{equation*}
 where $\rk{E}$ is the rank of $E$.  The real number $p\mu(E_*)$ is
 called the parabolic slope of $E_*$.

 Let $E_*$ be a parabolic bundle and let $F$ be a vector subbundle of 
 $E$.  Then the parabolic structure on $E$ induces a parabolic
 structure on $F$ as follows: first we take the induced filtration on
 $F$.  Next, for each $x \in S$ and for every
 $j \in \set{1, \dotsc, k_x}$, we set
 $\alpha^x_j(F) := \alpha^x_i(E)$, where $i$ is the largest 
 integer such that $F^j_x \subset E^i_x$ and $k_x$ is the length of 
 the flag in $F_x$.  The vector bundle $F$ together with this
 parabolic structure is denoted by $F_*$ and is called a parabolic
 subbundle of $E$.

 A parabolic bundle $E_*$ is said to be \emph{parabolic semistable} if
 for every non-zero proper parabolic subbundle $F_*$ we have  
 \begin{equation}
	\label{eq:a5}
 p\mu(F_*) \leq p\mu(E_*).
 \end{equation}
 The parabolic bundle $E_*$ is said to be \emph{parabolic stable} if
 all the inequalities in \eqref{eq:a5} are strict.

In what follows, we consider the full flag filtration.
We fix the integers $r \geq 1$ and $d \in \Z$.
We can represent  the system  of  weights $\alpha = \{(\alpha_1^x, \ldots, \alpha_k^x)\}_{x \in S}$ in matrix form $(\alpha^{x_i}_{j})^{1 \leq i \leq m}_{1 \leq j \leq r} \in \Q^{mr}$. 
For the definition of generic system of weights $\alpha$; see \cite[Definition 2.2]{AG17}, and for more details
see  \cite{BY}. 
Let
$$W^{(m)}_{r}(d) := \{ \alpha = (\alpha^{x_i}_{j})^{1 \leq i \leq m}_{1 \leq j \leq r} \in \Q^{mr} \, \vert \, \alpha \,\text{is generic and} \, d + \sum_{i =1}^m \sum_{j =1}^r \alpha^{x_i}_j = 0  \}. $$
In \cite[Definition 2.2]{I}, Inaba defined the system of special weights, and the system of weights which is not special is called generic.
See \cite[section 2.2]{BH}
for the non-emptiness of the space $W^{(m)}_{r}(d)$ of admissible parabolic weights. 

Let $$\alpha = (\alpha^x_1, \ldots, \alpha^x_r )_{x \in S} = (\alpha^{x_i}_{j})^{1 \leq i \leq m}_{1 \leq j \leq r} \in W^{(m)}_{r}(d)$$ be the fixed system of generic  rational parabolic weights
corresponding to the full flag filtration.
Let $\cat{M}^{ss}(r,d,\alpha)$ be the moduli space of 
semi-stable parabolic vector bundles of rank $r$, degree $d$ and weight system $\alpha$ (see \cite{MS}). The  moduli space 
$\mathcal{M}^{ss}(r,d, \alpha)$ is a normal projective variety
of dimension 
$$\dim{\cat{M}^{ss}(r,d,\alpha)} = r^2(g-1) +1 + \frac{m(r^2-r)}{2}.$$ 
Moreover,  the moduli space $\cat{M}^s(r,d, \alpha)$ of stable parabolic
bundles is an smooth and open subset of $\cat{M}^{ss}(r,d,\alpha)$. 

Let $\xi$ be a fixed holomorphic line bundle over $X$
of degree $d$ such that 
\begin{equation}
\label{eq:b13}
d = \deg{\xi} = - \sum_{x \in S} \sum_{i = 1}^r \alpha_i^x.
\end{equation}
Let $\cat{M}^{ss}(r,\alpha, \xi)$ be the moduli space of semi-stable parabolic vector bundles on $X$ of rank $r$
and  determinant $\xi$, that is, 
$\bigwedge^r E \cong \xi$, with weight system $\alpha$.
Then $\cat{M}^{ss}(r, \alpha, \xi)$ is a projective variety of dimension (see \cite[p.n. 557]{BH})
$$ (r^2-1)(g-1) + \frac{m(r^2-r)}{2}.$$

Let $\cat{M}^s(r,\alpha,\xi) \subset \cat{M}^{ss}(r,\alpha,\xi)$ be the open subset parametrizing the stable parabolic bundles. This is an irreducible smooth  variety.

Since the full flag weight system $\alpha$
is generic (see \cite[Definition 2.2]{AG17}), we have 
$$\cat{M}(r, d, \alpha) := \cat{M}^{ss}(r, d, \alpha) = \cat{M}^{s}(r, d, \alpha),$$
and  
$$\cat{M}(r,\alpha, \xi) := \cat{M}^{ss}(r, \alpha, \xi) = \cat{M}^{s}(r, \alpha, \xi).$$

Let $E_*$ be a parabolic bundle. We say that 
an endomorphism 
$\phi : E \longrightarrow E$ is {\it strongly parabolic}
 if for every $x \in S$, we have 
 $$\phi (E^i_x) \subset E^{i+1}_x.$$
 Similarly, $\phi : E \longrightarrow E$ is said to be 
 {\it weakly 
 parabolic} or just {\it parabolic} if it satisfies 
 $$\phi (E^i_x) \subset E^i_x.$$
 
The sheaf of strongly parabolic endomorphism on $E_*$ is 
denoted by $\SParENd{E_*}$ and the sheaf of parabolic endomorphism on $E_*$ is denoted by $\ParENd{E_*}$.

Now, we recall the notion of logarithmic connections (see \cite{BM} and \cite{D})
and its residues, and using these notions we define 
parabolic connections.
Let $E$ be a holomorphic vector bundle over $X$.
Let $$S = x_1 + \cdots + x_m$$ be the reduced effective
divisor associated with $S$.
A {\bf logarithmic connection}  on $E$ singular over $S$ is a $
\C$-linear map 
\begin{equation}
\label{eq:a6}
D : E \to E \otimes \Omega^1_X(S)  = E \otimes
\Omega^1_X \otimes \struct{X}(S)  
\end{equation}
which satisfies the Leibniz identity
\begin{equation}
\label{eq:a7}
D(f s)= f D(s) + df \otimes s,
\end{equation}
where $f$ is a local section of \struct{X} and $s$ is a 
local section of $E$.

We next describe the notion of residues of a
logarithmic connection $D$ in $E$ singular over 
$S$.  

Let $v \in E_{x_\beta}$ be any vector in the fiber of $E$
over $x_\beta \in S$. Let $U$ be an open set around $x_\beta$ and
$s: U \to E$ be a holomorphic section of $E$ over $U$ 
such that $s(x_\beta) = v$. Consider the following 
composition 
\begin{equation*}
\label{eq:a9}
\Gamma(U,E) \to \Gamma(U, E \otimes\Omega^1_X \otimes 
\struct{X}(S)) \to (E \otimes\Omega^1_X \otimes \struct{X}
(S))_{x_\beta} = E_{x_\beta},
\end{equation*}
where the equality is given because 
for any $x_\beta \in S$, the fibre $(\Omega^1_X 
\otimes \struct{X}(S))_{x_\beta}$ is canonically 
identified with $\C$ by sending a meromorphic form to 
its residue at $x_\beta$.
Then, we have an endomorphism on $E_{x_\beta}$
sending $v$ to $D(s)(x_\beta)$. We need to check that 
this endomorphism is well defined. Let $s' : U  \to E$ be another holomorphic section  such that $s'(x_\beta) = v$. Then 
 $$(s - s')(x_\beta) = v - v = 0.$$
 Let $t$ be a local coordinate at $x_\beta$ on $U$ such that 
$t(x_\beta) = 0$, that is, the coordinate system $(U,t)$ is centered at $x_\beta$. 
Since  $ s - s' \in 
\Gamma(U,E)$ and  $(s - s')(x_\beta) = 0$, $s - s' = t  \sigma$ 
for some $\sigma \in \Gamma(U,E)$. Now,
\begin{align*}
D( s - s') = D(t \sigma)& = t D(\sigma) + dt \otimes \sigma \\
& = t D(\sigma) + t (\frac{dt}{t} \otimes \sigma),
\end{align*}
and hence $D( s - s')(x_\beta) = 0$, that is,
$D(s)(x_\beta) = D(s')(x_\beta)$.

 Thus, we have a well defined 
endomorphism, denoted by 
\begin{equation}
\label{eq:a10}
Res(D,x_\beta) \in \ENd{E}_{x_\beta} = \ENd{E_{x_\beta}}
\end{equation}
that sends $v$ to $D(s)(x_\beta)$.  This endomorphism 
$Res(D,x_\beta)$ is called the \textbf{residue} of the logarithmic 
connection $D$ at the point $x_\beta \in S$ (see \cite{D} for the details). 

From \cite[Theorem 3]{O}, for a logarithmic connection
$D$ singular over $S$, we have
\begin{equation}
\label{eq:a11}
\deg{E}  + \sum_{j=1}^m \tr{Res(D,x_j)} = 0,
\end{equation}
where, $\deg{E}$ denotes the degree of $E$, and
$\tr{Res(D,x_j)}$ denote the trace of the  
endomorphism $Res(D,x_j) \in \ENd{E_{x_j}}$, for all 
$j =1, \ldots, m$.

Let $E_*$ be a parabolic vector bundle over $X$ with 
   a fixed system of weights $\alpha$ for full flag filtration. A parabolic 
connection (for the group $\text{GL}(r,\C)$) on $E_*$  is a logarithmic 
connection $D$ on the underlying vector bundle $E$
satisfying following conditions:
\begin{enumerate}
\item \label{a} For each $x \in S$ the homomorphism 
induced in the filtration over the fibre
$E_x$ satisfies 
\begin{equation}
\label{eq:a12}
D(E^i_x) \subset E^i_x \otimes \Omega^1_X(S)\vert_x
\end{equation}
for every $i = 1, \ldots, r$.

\item \label{b} For every $x \in S$ and for every 
$i = 1, \ldots, r$ the action of $Res(D,x)$ on the quotient ${E^i_x}/{E^{i+1}_x}$ is the multiplication by 
$\alpha_i^x$. Since $Res(D,x)$ preserves the filtration 
it acts on each quotient.
\end{enumerate}

We denote the parabolic connections by the pair 
$(E_*, D)$.
 
Let $\xi$ be the fixed line bundle  of degree $d$ and weight $\alpha$
such that 
\begin{equation}
\label{eq:a13}
d = \deg{\xi} = - \sum_{x \in S} \sum_{i = 1}^r \alpha_i^x.
\end{equation}
We want to fix a parabolic connection on $\xi$. We first equip $\xi$ with a parabolic structure as follows.
Let $\xi$ be given a trivial filtration at each $x \in S$ with parabolic weight 
\begin{equation*}
\label{eq:a14}
\beta^x := \sum_{i=1}^r \alpha_i^x.
\end{equation*}
We denote $\xi$ with a parabolic structure by $\xi_*$.
Then, from \eqref{eq:a13}, the parabolic degree of $\xi_*$ is
\begin{equation*}
\label{eq:a15}
p\deg(\xi_*) = \deg(\xi) + \sum_{x\in S} \beta^x
= \deg(\xi) + \sum_{x \in S} \sum_{i = 1}^r \alpha_i^x = 0.
\end{equation*}
Since $p\deg(\xi_*) = 0$, from \cite[Theorem 3.1]{B1},
$\xi_*$ admits a parabolic connection $D_{\xi_*}$
such that $Res(D_{\xi_*}, x) = \beta(x)$ for every 
$x \in S$. Now, we fix a pair $(\xi_*, D_{\xi_*})$ as 
described above.

Next, given a pair $(E_*, D)$, we say that $D$ is a parabolic connection (for the group $\text{SL}(r, \C)$ )
if the logarithmic connection $$\tr{D}: \bigwedge^r E 
\longrightarrow \bigwedge^r E \otimes \Omega^1_X(S) $$
induced from $D$
coincides with $D_{\xi_*}$, that is, 
we have an isomorphism 
\begin{equation}
\label{eq:a15.1}
((\bigwedge^r E)_*, \tr{D}) \cong (\xi_*, D_{\xi_*}).
\end{equation}
A parabolic connection $(E_*, D)$ (for the group
$\text{GL}(r, \C)$ or $\text{SL}(r, \C)$) is said to be semi-stable (respectively, stable) if for every non-zero 
proper parabolic subbundle $F_*$ of $E_*$, which is 
invariant under $D$, that is, 
$D(F) \subset F \otimes \Omega^1_X (S)$, we have 
\begin{equation*}
\label{eq:a16}
p\mu(F_*) \leq p\mu(E_*) ~~ \text{(respectively,} <). 
\end{equation*} 

Note that $(E_*, D)$
is stable does not imply that $E_*$ is stable.

Recall that a parabolic connection $(E_*, D)$ is said to be {\bf irreducible} if there does not exists a non-zero proper parabolic subbundle of $E_*$ which is invariant 
under $D$.

We have an useful result as follows.
\begin{lemma}
\label{lem:a}
Suppose that a parabolic connection $(E_*, D)$ is 
irreducible.  Then $(E_*, D)$ is stable.
\end{lemma}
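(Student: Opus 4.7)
The plan is a purely definitional unravelling; the proof is essentially immediate once one compares the two definitions carefully. First I would recall the definitions: stability of $(E_*, D)$ says that for every non-zero proper parabolic subbundle $F_*$ of $E_*$ with $D(F) \subset F \otimes \Omega^1_X(S)$, one has the strict inequality $p\mu(F_*) < p\mu(E_*)$. Irreducibility of $(E_*, D)$ says there is no non-zero proper parabolic subbundle of $E_*$ invariant under $D$, i.e., satisfying exactly the same $D$-invariance condition $D(F) \subset F \otimes \Omega^1_X(S)$.

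Therefore my argument would proceed by contraposition (or equivalently, by observing vacuous truth). If $(E_*, D)$ is irreducible, then the collection of non-zero proper parabolic subbundles $F_* \subset E_*$ satisfying $D(F) \subset F \otimes \Omega^1_X(S)$ is empty. Consequently, the universally quantified condition appearing in the definition of stability is vacuously verified, and hence $(E_*, D)$ is stable.

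The only point worth flagging is a check of consistency of terminology: in the preliminaries any vector subbundle $F \subset E$ carries a canonically induced parabolic structure $F_*$, so the class of ``parabolic subbundles'' coincides with the class of ``vector subbundles with induced parabolic structure''. In particular, the $D$-invariant objects ruled out by irreducibility are precisely the objects over which stability quantifies, so there is no mismatch between the two definitions. There is no substantial obstacle here; the lemma is really just a statement about the logical form of the two definitions, rather than something requiring geometric input.
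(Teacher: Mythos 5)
Your argument is correct: with the paper's definitions, stability quantifies exactly over the non-zero proper $D$-invariant parabolic subbundles that irreducibility rules out, so the stability inequality holds vacuously. This matches the paper's treatment, which states the lemma without proof precisely because it is an immediate consequence of the definitions.
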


Let $(E_*, D)$ and $(E'_*, D')$ be two full flag parabolic connections with same generic weight $\alpha$.
A morphism between parabolic connections $(E_*, D)$ and $(E'_*, D)$ is a parabolic  morphism (already defined)
$$\phi :E_* \longrightarrow E'_*$$ of parabolic vector bundles such that 
the following diagram involving logarithmic connections
\begin{equation}
\label{eq:mor}
\xymatrix@C=4em{
E \ar[r]^{D} \ar[d]^{\phi} & E \otimes \Omega^1_X(S) \ar[d]^{\phi \otimes \id{\Omega^1_X(S)}} \\
E' \ar[r]^{D'} & E'\otimes \Omega^1_X(S)}
\end{equation}
commutes. Moreover, we say that $(E_*, D)$
and $(E'_*, D')$ are isomorphic if $\phi$
is an isomorphism.

\begin{lemma}
\label{lem:1.a} Let 
$(E_*, D)$ and $(E'_*, D')$ be semi-stable full flag parabolic connections over $X$ with same  generic weight $\alpha$.
Then we have the followings. 
\begin{enumerate}
\item \label{a1} Suppose $(E_*, D)$ and $(E'_*, D')$ are stable and $p\mu(E_*) = p\mu(E'_*).$
If $$\phi : (E_*, D) \to (E'_*, D')$$  is a non-zero morphism of parabolic connections, then it is an isomorphism.
\item \label{a2} If $(E_*, D)$ is stable, then the only parabolic endomorphisms  of the pair  $(E_*, D)$ are scalars.
\end{enumerate}
\end{lemma}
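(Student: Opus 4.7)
The plan is to establish both parts by adapting the classical Schur's lemma to the category of parabolic connections. For \ref{a1}, the key observation is that the commutativity of \eqref{eq:mor} forces $\Ker{\phi}$ to be $D$-invariant and $\Img{\phi}$ to be $D'$-invariant, so both become parabolic sub-connections of the two stable pairs; stability will then be violated unless $\phi$ is an isomorphism. For \ref{a2}, I plan to deduce the statement from \ref{a1} by a standard eigenvalue trick, available because the base field is $\C$.

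For \ref{a1}, the first step is to read off from $(\phi \otimes \id{\Omega^1_X(S)}) \circ D = D' \circ \phi$ that if $s$ is a local section of $\Ker{\phi}$, then $(\phi \otimes \id{\Omega^1_X(S)})(D(s)) = D'(\phi(s)) = 0$, so $D(s)$ is a section of $\Ker{\phi} \otimes \Omega^1_X(S)$; dually, $D'$ sends local sections of $\Img{\phi}$ back into $\Img{\phi} \otimes \Omega^1_X(S)$. Equipping $\Ker{\phi}$ with the parabolic structure induced from $E_*$ and $\Img{\phi}$ with the one induced from $E'_*$ then produces parabolic sub-connections of $(E_*,D)$ and $(E'_*,D')$ respectively. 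Set $\mu := p\mu(E_*) = p\mu(E'_*)$. If $\phi$ is neither zero nor an isomorphism, then either $\Ker{\phi}_*$ is a proper non-zero parabolic sub-bundle of $E_*$, or $\Img{\phi}_*$ is a proper non-zero parabolic sub-bundle of $E'_*$; the two stability hypotheses then yield $p\mu(\Ker{\phi}_*) < \mu$ and $p\mu(\Img{\phi}_*) \leq \mu$. Combining these via the first isomorphism theorem in the parabolic category, relating $p\mu(E_*/\Ker{\phi}_*)$ to $p\mu(\Img{\phi}_*)$, produces a numerical contradiction with $p\mu(E_*) = \mu$.

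For \ref{a2}, let $\phi$ be a parabolic endomorphism of $(E_*,D)$, fix any $x \in X$, and let $\lambda \in \C$ be an eigenvalue of the linear endomorphism $\phi_x \in \ENd{E_x}$. Since \eqref{eq:mor} is $\C$-linear in the morphism, the difference $\phi - \lambda \cdot \id{E}$ is again a parabolic endomorphism commuting with $D$. At the point $x$ it has non-trivial kernel, so it cannot be a bundle isomorphism; applying \ref{a1} to this self-map of the stable pair $(E_*,D)$, for which the slope hypothesis is automatic, forces $\phi - \lambda \cdot \id{E} = 0$, so $\phi$ is a scalar.

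The main obstacle I anticipate lies in \ref{a1}: one must compare the parabolic structure that $\Img{\phi}$ inherits as a subbundle of $E'_*$ with the one it carries as a quotient of $E_*$ via the isomorphism $E_*/\Ker{\phi}_* \cong \Img{\phi}$. In general the two weight assignments on the same underlying flag need not coincide, so one must invoke the standard fact that for a parabolic morphism the subbundle structure has parabolic degree at least the quotient one, giving $p\mu(E_*/\Ker{\phi}_*) \leq p\mu(\Img{\phi}_*)$. Once this comparison is in hand, the slope inequalities collide and force $\phi$ to be an isomorphism.
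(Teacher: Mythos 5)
Your proposal is correct and follows essentially the same route as the paper: part \eqref{a1} via the $D$-invariance of $\Ker{\phi}$ and $D'$-invariance of $\Img{\phi}$ together with the Kernel--Image and Image--Coimage slope comparison against stability, and part \eqref{a2} via the eigenvalue trick reducing to \eqref{a1}. If anything, you are slightly more explicit than the paper about the one delicate point, namely comparing the quotient parabolic structure on $E_*/\Ker{\phi}_*$ with the subbundle structure $\Img{\phi}_*$ inherited from $E'_*$, which the paper's proof passes over silently.
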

\begin{proof}\mbox{}
\begin{enumerate}
\item First note that a subbundle and its quotient bundle of a parabolic vector bundle admits a parabolic structure from its original parabolic bundle (see the remark after Definition 1.8 of \cite{MS}). We shall use usual technique of Kernel-Image and Image-Coimage sequences to prove the first part.

Let $\Ker{\phi}_* \subset E_*$ is a parabolic subbundle.
Then, $\Ker{\phi}_*$ is $D$-invariant parabolic subbundle of $E$. Since $(E_*, D)$ is stable, we have $p\mu(\Ker{\phi}_*) < p\mu(E_*)$. Since $\phi \neq 0$, we have $\Img{\phi} \neq 0$. Now, $\Img{\phi}$ inherits a parabolic structure from $E'_*$.

Consider the Kernel-Image short exact sequence 
$$0 \to \Ker{\phi}_* \to E_* \to \Img{\phi}_* \to 0.$$
Then, $p\mu(E_*) < p\mu(\Img{\phi}_*)$.
Next, consider the Image-Coimage short exact sequence 
$$ 0 \to \Img{\phi}_* \to E'_* \to (E' / \Img{\phi})_* \to 0.$$
Note that $\Img{\phi}_*$ is a $D'$-invariant parabolic subbundle of $E'$. Since $(E'_*, D')$ is stable, we have 
$p\mu(\Img{\phi}_*) < p\mu (E'_*)$. Thus, we get that 
$p\mu(E_*) < p\mu(E'_*)$ which contradicts the assumption that
$p\mu(E_*) = p\mu (E'_*)$. Therefore, on underlying vector bundles, we have $\Ker{\phi} = 0$ and 
$\Img{\phi} = E'$.

\item Let  $\psi : (E_*, D) \to (E_*, D)$ be the given endomorphism. Let $x \in X$, and let $\lambda \in \C$ be an eigen value of the linear map $\psi(x) : E_x \to E_x$. Note that $X$ is a compact Riemann surface, it does not admit any non-constant holomorphic function, therefore the eigen values and their multiplicities are independent of $x \in X$.
Since $D$ is $\C$-linear, 
$\psi - \lambda \id{E_*}$ is an endomorphism of $(E_*, D)$, that is, 
$$D \circ (\psi - \lambda \id{E_*}) = (\psi - \lambda \id{E_*}) \otimes \id{\Omega^1_X(S)} \circ D.$$
Since $(E_*, D)$ is stable, 
from \eqref{a1}, $\psi - \lambda \id{E_*}$ is either a zero morphism or an isomorphism. 
Since $\lambda$ is an eigen value of $\psi(x)$, kernel of 
$\psi - \lambda \id{E_*}$ is non-trivial. Thus, $\psi = \lambda \id{E_*}$, where $\lambda \in \C$.

\end{enumerate}
\end{proof}

Let $\cat{M}^{ss}_{pc}(r,d, \alpha)$ denote the moduli space of semi-stable parabolic connections for the group \text{GL}$(r,\C)$ of rank $r$, degree 
$d$ and weight system $\alpha$. Let 
$$\cat{M}^{sm}_{pc}(r, d, \alpha) \subset \cat{M}^{ss}_{pc}(r,d, \alpha)$$ be the smooth locus of $\cat{M}^{ss}_{pc}(r,d, \alpha)$. Let $\cat{M}_{pc}(r,d, \alpha) $ be the 
moduli space of stable parabolic connections. Then, from
 \cite[Theorem 2.1]{I}, 
$\cat{M}_{pc}(r,d, \alpha)$
is a smooth irreducible quasi-projective variety
of dimension $2r^2(g-1) +2 + m(r^2-r)$, and hence 
$\cat{M}_{pc}(r,d, \alpha) \subset \cat{M}^{sm}_{pc}(r,d, \alpha) $.

Let 
\begin{equation}
\label{eq:a20.5}
\iota : \cat{M}'_{pc}(r,d, \alpha) \hookrightarrow \cat{M}_{pc}(r, d, \alpha) 
\end{equation}
be the natural inclusion, where $\cat{M}'_{pc}(r,d, \alpha)$
is the open subvariety consists of the pairs $(E_*, D)$
whose underlying vector bundle $E_*$ is stable.

Let
\begin{equation}
\label{eq:a20.6}
\pi : \cat{M}'_{pc}(r, d, \alpha) \longrightarrow \cat{M}(r, d, \alpha)
\end{equation}
be a map defined by sending $(E_*, D)$ to $E_*$.
In other words, $\pi$ is the forgetful map which forgets the
parabolic connection. 
Note that $\pi$ is a surjective morphism follows from 
\cite[Theorem 3.1]{B1}.
Now, for every $E_* \in \cat{M}(r, d, \alpha)$,
$\pi^{-1}(E_*)$ is an affine space modelled over 
$\coh{0}
{X}{\Omega^1_X(S) \otimes \SParENd{E_*}}$ which is described as follows.
Given two parabolic connections $D$ and $D'$
on a parabolic vector bundle $E_*$, 
the difference $D - D'$ is an $\struct{X}$-module homomorphism from $E$ to $E \otimes \Omega^1_X(S)$
such that the residues $$Res(D,x) - Res(D',x) = Res(D-D', x)$$ acts as the zero morphism on each successive 
quotients $E^i_x / E^{i+1}_x$. Therefore, for every 
$x \in S$, we have 
$$(D-D')(E^i_x) \subset E^{i+1}_x \otimes \Omega^1_X(S)\vert_{x}.$$
Conversely, given any parabolic connection 
$(E_*, D)$ and $$\Phi \in \coh{0}
{X}{\Omega^1_X(S) \otimes \SParENd{E_*}},$$
$D + \Phi$ is again a parabolic connection on $E_*$.
Thus, the space $\pi^{-1}(E_*)$ of parabolic connections
on $E_*$ forms an 
affine space modelled over the vector space 
$\coh{0}
{X}{\Omega^1_X(S) \otimes \SParENd{E_*}}.$

Now,
by deformation theory, the tangent space
$T_{E_*}\cat{M}(r, d, \alpha)$ at $E_*$ 
is isomorphic to 
$\coh{1}{X}{\ParENd{E_*}}.$
By parabolic version of Serre duality, we have 
\begin{equation}
\label{eq:a21.2}
\coh{1}{X}{\ParENd{E_*}}^{\vee} \cong \coh{0}
{X}{\Omega^1_X(S) \otimes \SParENd{E_*}},
\end{equation}
therefore the cotangent space $T^*_{E_*}\cat{M}(r, d, \alpha)$ at $E_*$ is isomorphic to $\coh{0}
{X}{\Omega^1_X(S) \otimes \SParENd{E_*}}$.

Thus, $T^*_{E_*}\cat{M}(r, d, \alpha)$ acts on the fibre $\pi^{-1}(E_*)$ faithfully and transitively, which proves the following Lemma.
 
\begin{lemma}
\label{lem:1} The moduli space
$\cat{M}'_{pc}(r, d, \alpha)$ is a $T^*\cat{M}(r, d, \alpha)$-torsor.
\end{lemma}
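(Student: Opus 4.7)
The plan is to take the forgetful morphism $\pi : \cat{M}'_{pc}(r,d,\alpha) \longrightarrow \cat{M}(r,d,\alpha)$ from \eqref{eq:a20.6} as the structure map of the torsor, and to assemble the three pointwise ingredients that the discussion preceding the lemma has already prepared into a single global statement. Namely: $\pi$ is surjective by \cite[Theorem 3.1]{B1}; every fibre $\pi^{-1}(E_*)$ is an affine space modelled on $\coh{0}{X}{\Omega^1_X(S) \otimes \SParENd{E_*}}$ via the assignment $(E_*,D) \mapsto (E_*, D+\Phi)$; and by the parabolic Serre duality \eqref{eq:a21.2} this vector space is canonically identified with $T^*_{E_*}\cat{M}(r,d,\alpha)$.

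First I would define the action map
\begin{equation*}
\mu : T^*\cat{M}(r,d,\alpha) \times_{\cat{M}(r,d,\alpha)} \cat{M}'_{pc}(r,d,\alpha) \longrightarrow \cat{M}'_{pc}(r,d,\alpha), \quad (\Phi, (E_*,D)) \longmapsto (E_*, D + \Phi),
\end{equation*}
and check that $\mu$ is a morphism of varieties. The natural route is via a local (\'etale or analytic) universal parabolic bundle $\cat{E}_*$ on an open $U \subset \cat{M}(r,d,\alpha)$: the relative direct image $R^0 p_{U*}(\Omega^1_{X \times U/U}(S \times U) \otimes \SParENd{\cat{E}_*})$ is locally free and represents the restriction of $T^*\cat{M}(r,d,\alpha)$ to $U$; the same sheaf acts on the relative space of parabolic connections on $\cat{E}_*$ by $D \mapsto D + \Phi$, and this addition is manifestly algebraic. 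I would also have to verify that the entire affine fibre $\pi^{-1}(E_*)$ sits inside $\cat{M}'_{pc}(r,d,\alpha)$, but this is immediate from parabolic stability of $E_*$: any $D$-invariant proper parabolic subbundle $F_*$ is in particular a proper parabolic subbundle, so $p\mu(F_*) < p\mu(E_*)$, and hence $(E_*,D)$ is a stable parabolic connection for every parabolic connection $D$ on $E_*$.

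Finally, freeness and transitivity of $\mu$ on each fibre are immediate from the pointwise description: given two parabolic connections $D, D'$ on the same stable $E_*$, their difference is the unique element of $\coh{0}{X}{\Omega^1_X(S) \otimes \SParENd{E_*}} \cong T^*_{E_*}\cat{M}(r,d,\alpha)$ taking $D$ to $D'$. The main obstacle is really only the first step, promoting the fibrewise affine structure to a globally algebraic action with a local trivialisation; surjectivity, freeness, and transitivity themselves are direct consequences of the material already in place before the lemma and of the existence theorem \cite[Theorem 3.1]{B1}.
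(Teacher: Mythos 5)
Your proposal is correct and follows essentially the same route as the paper: the forgetful map $\pi$ is surjective by \cite[Theorem 3.1]{B1}, each fibre is the affine space of parabolic connections on $E_*$ modelled on $\coh{0}{X}{\Omega^1_X(S) \otimes \SParENd{E_*}}$, and parabolic Serre duality \eqref{eq:a21.2} identifies this with $T^*_{E_*}\cat{M}(r,d,\alpha)$, giving the free and transitive action. The extra points you raise (algebraicity of the action via a local universal family, and the observation that stability of $E_*$ forces every pair $(E_*,D)$ to be stable so the fibre is the full affine space) are correct refinements of details the paper leaves implicit.
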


Let $\cat{M}_{pc}(r,d,\xi)$ be the moduli space of stable parabolic connections (for the group \text{SL}$(r, \C)$) of rank $r$, degree 
$d$, generic weight system $\alpha$, and with fixed 
determinant $(\xi_*, D_{\xi_*})$.

The moduli space
$\cat{M}_{pc}(r,d,\xi)$ is a smooth irreducible quasi-projective variety of dimension $2(g-1)(r^2-1) + m (r^2-r)$ (see \cite[Proposition 5.1]{I}).

Let 
\begin{equation}
\label{eq:a21}
\cat{M}'_{pc}(r, \alpha, \xi) \subset \cat{M}_{pc}(r, \alpha, \xi)
\end{equation}
be the subset containing $(E_*, D)$ whose underlying parabolic vector bundle $E_*$ is stable.
Then, from [\cite{M} Theorem 2.8(A)], $\cat{M}'_{pc}(r, \alpha, \xi)$ is a  Zariski open subset of $\cat{M}_{pc}(r, \alpha, \xi)$.

Let $\SParend{E_*}$ and $\Parend{E_*}$ denote respectively the sheaves of trace zero strongly and weakly parabolic endomorphisms on $E_*$.

Let
\begin{equation}
\label{eq:a21.1}
\pi_{\xi} : \cat{M}'_{pc}(r, \alpha, \xi) \longrightarrow \cat{M}(r,\alpha, \xi)
\end{equation}
be the map defined by sending $(E_*, D)$ to $E_*$, that is, $\pi_{\xi}$ is the forgetful map.
Again from \cite[Theorem 3.1]{B1},  $\pi_{\xi}$ is a surjective morphism .
Now, for every $E_* \in \cat{M}(r,\alpha, \xi)$,
$\pi^{-1}_{\xi}(E_*)$ is an affine space modelled over 
$\coh{0}
{X}{\Omega^1_X(S) \otimes \SParend{E_*}}$ which is described as follows.
Given two parabolic connections $D$ and $D'$
on a parabolic vector bundle $E_*$, 
the difference $D - D'$ is an $\struct{X}$-module homomorphism from $E$ to $E \otimes \Omega^1_X(S)$
such that the residues $$Res(D,x) - Res(D',x) = Res(D-D', x)$$ acts as the zero morphism on each successive 
quotients $E^i_x / E^{i+1}_x$. Therefore, for every 
$x \in S$, we have 
$$(D-D')(E^i_x) \subset E^{i+1}_x \otimes \Omega^1_X(S)\vert_{x}.$$
Moreover, since the determinant of parabolic connections  are fixed, we have $$\tr{D-D'} = 0.$$
Conversely, given any parabolic connection 
$(E_*, D)$ and $$\Phi \in \coh{0}
{X}{\Omega^1_X(S) \otimes \SParend{E_*}},$$
that is, $\Phi$ is strongly parabolic morphism 
with $\tr{\Phi} = 0$. Then,
$D + \Phi$ is again a parabolic connection on $E_*$.
Thus, the space $\pi^{-1}(E_*)$ of parabolic connections
on $E_*$ with fixed determinant  forms an 
affine space modelled over the vector space 
$\coh{0}
{X}{\Omega^1_X(S) \otimes \SParend{E_*}}.$

Now,
by deformation theory, the tangent space
$T_{E_*}\cat{M}(r, \alpha, \xi)$ at $E_*$ 
is isomorphic to 
$\coh{1}{X}{\Parend{E_*}}.$
By parabolic version of Serre duality, we have 
\begin{equation}
\label{eq:a21.3}
\coh{1}{X}{\Parend{E_*}}^{\vee} \cong \coh{0}
{X}{\Omega^1_X(S) \otimes \SParend{E_*}},
\end{equation}
therefore the cotangent space $T^*_{E_*}\cat{M}(r, \alpha, \xi)$ at $E_*$ is isomorphic to $\coh{0}
{X}{\Omega^1_X(S) \otimes \SParend{E_*}}$.

Thus, $T^*_{E_*}\cat{M}(r, \alpha, \xi)$ acts on the fibre $\pi^{-1}_{\xi}(E_*)$ faithfully and transitively, which proves the following Lemma. 
\begin{lemma}
\label{lem:2} The moduli space
$\cat{M}'_{pc}(r, \alpha, \xi)$ is a $T^*\cat{M}(r, \alpha, \xi)$-torsor.
\end{lemma}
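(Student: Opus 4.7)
The plan is that the torsor structure follows essentially directly from the preparatory analysis established in the paragraph immediately preceding the statement; my proposal is to organize these observations into the three verifications required of a torsor, and then to upgrade them from a pointwise statement to an algebraic one.

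First I would record that the forgetful morphism $\pi_\xi$ is surjective, already cited from \cite[Theorem 3.1]{B1}: every stable parabolic bundle $E_*$ with determinant $\xi$ admits a parabolic connection $D$ whose induced trace connection on $\bigwedge^r E$ coincides with $D_{\xi_*}$. Next, for a fixed $E_* \in \cat{M}(r,\alpha,\xi)$, I would verify that the fiber $\pi_\xi^{-1}(E_*)$ is canonically an affine space under $\coh{0}{X}{\Omega^1_X(S) \otimes \SParend{E_*}}$: given two such connections $D, D'$, the difference $D - D'$ is $\struct{X}$-linear; its residue at each $x \in S$ annihilates every successive quotient $E^i_x/E^{i+1}_x$, hence lies in strongly parabolic endomorphisms; and the trace vanishes because both $D$ and $D'$ induce $D_{\xi_*}$ on the determinant. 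Conversely, adding any strongly parabolic, trace-zero $\Phi$ to $D$ preserves the parabolic-connection axioms and the determinant condition, giving a free transitive action on the fiber.

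Then I would invoke the parabolic Serre duality \eqref{eq:a21.3} together with the deformation-theoretic identification $T_{E_*}\cat{M}(r,\alpha,\xi) \cong \coh{1}{X}{\Parend{E_*}}$ to rewrite the affine model space as the cotangent space $T^*_{E_*}\cat{M}(r,\alpha,\xi)$. Dimensions are consistent: the dimension $2(r^2-1)(g-1) + m(r^2-r)$ of $\cat{M}_{pc}(r,\alpha,\xi)$ from \cite[Proposition 5.1]{I} is exactly twice that of $\cat{M}(r,\alpha,\xi)$, as expected for a cotangent torsor.

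The only genuinely non-formal step, and the one I would treat most carefully, is globalizing the fiberwise action to an algebraic morphism
\[
T^*\cat{M}(r,\alpha,\xi) \times_{\cat{M}(r,\alpha,\xi)} \cat{M}'_{pc}(r,\alpha,\xi) \longrightarrow \cat{M}'_{pc}(r,\alpha,\xi).
\]
A universal parabolic family may fail to exist globally on $\cat{M}(r,\alpha,\xi)$, so I would work on an étale cover over which such a family $\mathcal{E}_*$ is available; there the relative direct image of $\Omega^1_X(S) \otimes \SParend{\mathcal{E}_*}$ along the projection to the base identifies, by relative parabolic Serre duality, with the pullback of $T^*\cat{M}(r,\alpha,\xi)$, and pointwise addition of a section to a family of parabolic connections is plainly algebraic. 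Faithfully flat descent then yields the action on the moduli space itself. Combined with the pointwise freeness and transitivity already established, this exhibits $\cat{M}'_{pc}(r,\alpha,\xi)$ as a $T^*\cat{M}(r,\alpha,\xi)$-torsor, completing the proof.
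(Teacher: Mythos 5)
Your proposal is correct and follows essentially the same route as the paper: surjectivity of $\pi_\xi$ via \cite[Theorem 3.1]{B1}, the fiberwise identification of $\pi_\xi^{-1}(E_*)$ as an affine space under $\coh{0}{X}{\Omega^1_X(S)\otimes\SParend{E_*}}$ using the difference of connections and the trace-zero condition, and parabolic Serre duality \eqref{eq:a21.3} to identify this space with $T^*_{E_*}\cat{M}(r,\alpha,\xi)$. Your additional paragraph on globalizing the pointwise action to an algebraic one via an étale cover and descent is a sound refinement of a step the paper leaves implicit, but it does not change the argument.
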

 
A {\bf strongly} parabolic Higgs bundle $(E_*, \Phi)$ is a 
parabolic vector bundle $E_*$ together with an $\struct{X}$-module homomorphism 
$$\Phi : E \longrightarrow E \otimes \Omega^1_X \otimes \struct{X}(S)$$
such that 
for every $x \in S$ the homomorphism induced in the 
filtration over the fibre $E_x$ satisfies
$$\Phi (E^i_x) \subset E^{i+1}_x \otimes \Omega^1_X(S)\vert_{x},$$
where  
$\Omega^1_X (S)$ is the line bundle $\Omega^1_X \otimes \struct{X}(S)$.
$\Phi$ is called parabolic Higgs field on $E_*$.
Similarly, we can define {\bf weakly} parabolic Higgs bundle to be a pair $(E_*, \Phi)$ where $E_*$ is a parabolic vector bundle and 
$$\Phi : E \longrightarrow E \otimes \Omega^1_X \otimes \struct{X}(S)$$
is an \struct{X}-module homomorphism
such that for very $x \in S$, we have
$$\Phi (E^i_x) \subset E^{i}_x \otimes \Omega^1_X(S)\vert_{x}.$$

A parabolic Higgs bundle $(E_*, \Phi)$ (either strongly or weakly),  is said to be semi-stable (respectively, stable) if for every non-zero 
proper parabolic subbundle $F_*$ of $E_*$, which is 
invariant under $\Phi$, that is, 
$\Phi(F) \subset F \otimes \Omega^1_X (S)$, we have 
\begin{equation*}
\label{eq:a11.1}
p\mu(F_*) \leq p\mu(E_*) ~~ \text{(respectively,} <). 
\end{equation*} 
 If we do not say explicitly if a parabolic Higgs 
 bundle is strongly or weakly parabolic, it will be understood that it is strongly parabolic.
 
 Let $\cat{M}_{Higgs}(r, d, \alpha)$ be the moduli space of semi-stable (strongly) parabolic Higgs bundles
 of rank $r$, degree $d$, weight system $\alpha$  \cite{Y}, \cite{GL}.
 Then,
$\cat{M}_{Higgs}(r, d, \alpha)$ is an irreducible normal quasi-projective variety of dimension 
$2 r^2(g-1)+2 + m(r^2-r)$ (see \cite[p.n. 432]{GL}).
If $\alpha$ is a generic system of weights, then every 
semi-stable parabolic  Higgs bundle is stable, and the moduli space of stable Higgs  bundle lies in the smooth 
locus of $\cat{M}_{Higgs}(r, d, \alpha)$. As we have assumed $\alpha$ is generic system of weights, the moduli space  $\cat{M}_{Higgs}(r, d, \alpha)$  is smooth. 

Let $\cat{M}'_{Higgs}(r, d, \alpha) \subset \cat{M}_{Higgs}(r, d, \alpha)$ be the subset consists of those parabolic Higgs bundles whose underlying parabolic 
bundle is stable. As described above and from 
\eqref{eq:a21.2}
\begin{equation}
\label{eq:a22.1}
T^*\cat{M}(r, d, \alpha) \cong \cat{M}'_{Higgs}(r, d, \alpha).
\end{equation}

Let $Z := \cat{M}_{Higgs}(r, d, \alpha) \setminus \cat{M}'_{Higgs}(r, d, \alpha)$.
Then, in view of  \cite[Proposition 5.10]{BGL} for
$r \geq 3$,  we have 
\begin{equation}
\label{eq:a22.3}
\mbox{codim}(Z, \cat{M}_{Higgs}(r, d, \alpha) \geq 
2.
\end{equation}

Now, we recall the parabolic Hitchin map on the moduli space of strongly parabolic Higgs bundles \cite{GL}, \cite{SWW}.

Set
 $ (\Omega^1_X)^{i} S^{(i-1)} := (\Omega^1_X)^{\otimes i} \otimes \struct{X}(S)^{\otimes (i-1)},$ where $ i =1,\ldots, r,$ and consider the vector space 
 $$\cat{H}_P = \bigoplus_{i =1}^{r} \coh{0}{X}{(\Omega^1_X)^{i} S^{(i-1)}}.$$
 Then the dimension  $\mbox{dim}_{\C}{\cat{H}_P}$ is half of the dimension of the moduli space $\cat{M}_{Higgs}(r, d, \alpha)$ \cite[Section 3, p.n. 433]{GL}, that is, 
 $r^2(g-1) + 1+ \frac{1}{2}m(r^2-r)$.

 Define the parabolic Hitchin map 
 \begin{equation}
 \label{eq:a22.4}
 h_P : \cat{M}_{Higgs}(r, d, \alpha) \longrightarrow \cat{H}_P
 \end{equation}
 by sending each (strongly) parabolic Higgs bundle $(E_*, \Phi)$
 to the characteristic polynomial of $\Phi$.
 Then, the $h_P$ is a proper morphism \cite[Corollary 5.12]{Y}, and  for any generic point $a \in 
 \cat{H}_P$, the fibre $h_P^{-1}(a)$ is an abelian variety (see \cite[Lemma 3.2]{GL} and \cite[Theorem 6]{SWW}).

\section{Picard group of the moduli spaces}
\label{Pic}
The Picard group of moduli spaces is a very important 
invariant while studying the classification
problems in algebraic geometry. In this section, we compute the Picard group of the moduli spaces
$\cat{M}_{pc}(r, d, \alpha)$.

We first compute the dimension of the space of 
isomorphic stable parabolic connections on a parabolic 
vector bundle. Let $E_*$ be a full flag parabolic vector bundle 
over $X$ with a fixed $\alpha \in W^{(m)}_r(d)$.

Let $\cat{C}onn_{\alpha}(E_*)$ denote the space of all 
parabolic connections $D$ on $E_*$ such that $(E_*, D)$ is stable.  Notice that $\cat{C}onn_{\alpha}(E_*)$ is an affine space modelled over the vector space $\coh{0}
{X}{\Omega^1_X(S) \otimes \SParENd{E_*}}$.

Given a parabolic automorphism $\Phi$ of $E_*$ and a parabolic connection $D$ on $E_*$, the $\C$-linear morphism 
$\Phi \otimes \id{\Omega^1_X(S)} \circ D \circ 
\Phi^{-1}$ defines a parabolic connection on $E_*$.
In fact, $$(D,\Phi) \mapsto \Phi \otimes \id{\Omega^1_X(S)} \circ D \circ 
\Phi^{-1}$$ 
defines a natural action of $\text{Aut}(E_*)$ on $\cat{C}
onn_{\alpha}(E_*)$, called gauge transformation.
We would like to compute the dimension of the quotient 
space $\cat{C}onn_{\alpha}(E_*) / \text{Aut}(E_*)$, which 
parametrizes all isomorphic parabolic connections on $E_*
$. 

The Lie algebra of the holomorphic automorphism group 
$\text{Aut}(E_*)$ is $\coh{0}{X}{\ParENd{E_*}}$. Therefore, 
$$\text{dim}~\text{Aut}(E_*) = \text{dim}~\coh{0}{X}{\ParENd{E_*}}.$$

Choose any $D \in \cat{C}onn_{\alpha}(E_*)$. Since the pair $(E_*, D)$ is stable, from Lemma \ref{lem:1.a} \eqref{a2} the isotropy subgroup 
$$\text{Aut}(E_*)_{D} = \{\Phi \in \text{Aut}(E_*)~\vert~\Phi \otimes 
\id{\Omega^1_X(S)} \circ D \circ \Phi^{-1} = D \}$$ consists of  the scalar 
automorphisms of $E_*$.
Then, the dimension of the space $\cat{C}onn_{\alpha}(E_*) / \text{Aut}(E_*)$ is 
\begin{align*}
& \text{dim}~\coh{0}{X}{\Omega^1_X(S) \otimes \SParENd{E_*}} - 
\text{dim}~\coh{0}{X}{\ParENd{E_*}} \, + \, 1\\
 \, =  & \, \text{dim}~\coh{1}{X}{\ParENd{E_*}} - 
\text{dim}~\coh{0}{X}{\ParENd{E_*}} \,+ \, 1 
\end{align*}
\begin{equation}
\label{eq:dim}
\, =   \, - \chi(\ParENd{E_*}) + 1,
\end{equation}

where the first equality is due to Parabolic Serre duality and  $\chi (\ParENd{E_*})$ denotes the Euler-Poincar\'e characteristic of $\ParENd{E_*}$ over $X$.

Clearly $\ParENd{E_*}$ is a subsheaf of $\End{E}$. Then,
there is a natural skyscraper sheaf $\cat{K}_S$ supported on parabolic points $S$ such that 
\begin{equation*}
\label{eq:ses}
0 \to \ParENd{E_*} \to \End{E} \to \cat{K}_S \to 0
\end{equation*}
is a short exact sequence of sheaves on $X$. Then, 
we get 
\begin{equation}
\label{eq:add}
\chi (\End{E}) = \chi(\ParENd{E_*}) + \chi(\cat{K}_S)
\end{equation}
Since we are considering full flag filtrations, from 
\cite[Lemma 2.4]{BH}, we have 
\begin{equation}
\label{eq:sky}
\chi(\cat{K}_S) =  \frac{m (r^2-r)}{2}.
\end{equation}
 
Moreover, from Riemann-Roch Theorem, we have 
\begin{equation}
\label{eq:end}
\chi(\End{E}) = r^2(1-g).
\end{equation} 
 
Therefore, from \eqref{eq:dim}, \eqref{eq:add}, \eqref{eq:sky} and \eqref{eq:end}, we get that the dimension of the space $\cat{C}onn_{\alpha}(E_*) / \text{Aut}(E_*)$ is $$r^2(g-1)+1 + \frac{m(r^2-r)}{2}.$$

\begin{lemma}
\label{lem:a.2}
Let $E_*$ be a full flag stable parabolic vector bundle
of rank $r$ and $d$ with a fixed $\alpha \in W^{(m)}_r(d)$.  Then  $$\cat{C}onn_{\alpha}(E_*) / \Aut{E_*} = \text{affine space over the vector space}~ \coh{0}{X}{\Omega^1_X(S) \otimes \SParENd{E_*}},$$ and dimension of the space is equal to 
$r^2(g-1)+1 + \frac{m(r^2-r)}{2}$.
\end{lemma}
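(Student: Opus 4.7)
The plan is to assemble the pieces already laid out in the discussion immediately preceding the statement. First I would verify that $\cat{C}onn_{\alpha}(E_*)$ is a nonempty affine space modelled on $\coh{0}{X}{\Omega^1_X(S)\otimes\SParENd{E_*}}$. Nonemptiness comes from \cite[Theorem 3.1]{B1}, applicable because $\alpha\in W^{(m)}_r(d)$ forces the parabolic-degree compatibility \eqref{eq:b13}. Given two parabolic connections $D,D'$ on $E_*$, the Leibniz rule makes $D-D'$ an $\struct{X}$-linear map; condition \ref{a} shows it preserves the parabolic filtration, and since $Res(D-D',x)=Res(D,x)-Res(D',x)$ must act as zero on each successive quotient $E^i_x/E^{i+1}_x$ (by condition \ref{b}), the difference is strongly parabolic, i.e.\ an element of $\coh{0}{X}{\Omega^1_X(S)\otimes\SParENd{E_*}}$.

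The next step is to quotient by the gauge action. The key input is the parabolic Schur lemma: since $E_*$ is parabolic stable, an essentially identical Kernel--Image argument to the one used in Lemma \ref{lem:1.a}\eqref{a2} (replacing the invariance hypothesis by parabolic subbundle-versus-bundle) yields $\coh{0}{X}{\ParENd{E_*}}=\C$, so $\Aut{E_*}=\units{\C}$. Because scalar automorphisms act trivially by conjugation on every $D\in\cat{C}onn_{\alpha}(E_*)$, the action of $\Aut{E_*}$ on $\cat{C}onn_{\alpha}(E_*)$ is trivial, and the quotient coincides with $\cat{C}onn_{\alpha}(E_*)$ as a variety. This gives the asserted affine-space identification.

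Finally, the dimension count combines parabolic Serre duality \eqref{eq:a21.2} with the Euler-characteristic identity coming from the short exact sequence $0\to\ParENd{E_*}\to\End{E}\to\cat{K}_S\to 0$. Substituting $\chi(\End{E})=r^2(1-g)$ (Riemann--Roch) and $\chi(\cat{K}_S)=m(r^2-r)/2$ from \cite[Lemma 2.4]{BH} yields $\chi(\ParENd{E_*})=r^2(1-g)-m(r^2-r)/2$, whence $\dim{\coh{1}{X}{\ParENd{E_*}}}=-\chi(\ParENd{E_*})+\dim{\coh{0}{X}{\ParENd{E_*}}}=r^2(g-1)+m(r^2-r)/2+1$. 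The one conceptual ingredient on which the result hinges is the parabolic Schur statement for the bundle $E_*$; that is the part I would flag as the main point, everything else being routine assembly of Serre duality, Riemann--Roch, and the skyscraper exact sequence.
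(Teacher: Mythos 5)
Your proposal is correct and follows essentially the same route as the paper: the affine structure on $\cat{C}onn_{\alpha}(E_*)$ modelled on $\coh{0}{X}{\Omega^1_X(S)\otimes\SParENd{E_*}}$, the parabolic Schur-type fact $\coh{0}{X}{\ParENd{E_*}}=\C\cdot\id{E_*}$ for the stable bundle $E_*$ (so the gauge group is just scalars acting trivially), and the dimension count via parabolic Serre duality, the skyscraper exact sequence with $\chi(\cat{K}_S)=m(r^2-r)/2$, and Riemann--Roch. The only cosmetic difference is that the paper simply quotes stability for the Schur statement rather than rerunning the Kernel--Image argument of Lemma \ref{lem:1.a}.
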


\begin{proof}
Since $E_*$ is stable parabolic bundle, we have 
$\coh{0}{X}{\ParENd{E_*}} = \C \cdot \id{E_*}$. Now, Lemma follows from above discussion.
\end{proof}

Let $$Z_{pc} \,:= \, \cat{M}_{pc}(r, d, \alpha) \setminus 
\cat{M}'_{pc}(r, d, \alpha).$$ Then, using the similar steps as in \cite[Lemma 3.1]{BM7}, we can show the following lemma. 

\begin{lemma}
\label{lemm:3} Let $\alpha \in W^{(m)}_r(d)$. Then,
for $r \geq 2$, and $g \geq 3$,  we have 
$$\codim{Z_{pc}, \cat{M}_{pc}(r, d, \alpha)} \geq 2.$$
\end{lemma}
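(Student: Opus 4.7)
The approach is to stratify $Z_{pc}$ by the Harder-Narasimhan type of the underlying parabolic bundle. Since $\alpha \in W^{(m)}_r(d)$ is generic, every semistable parabolic bundle of weight $\alpha$ is already stable, so for every point $(E_*, D) \in Z_{pc}$ the underlying parabolic bundle $E_*$ is strictly unstable and thus admits a nontrivial Harder-Narasimhan filtration $0 = F_0 \subsetneq F_1 \subsetneq \cdots \subsetneq F_k = E_*$ with $k \geq 2$ and graded pieces of strictly decreasing parabolic slopes.

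For each admissible HN-type $\tau = ((r_i, d_i, \alpha^{(i)}))_{i=1}^{k}$, I would bound the dimension of the corresponding stratum $W_\tau \subset Z_{pc}$ by summing three contributions: (i) the dimensions of the moduli spaces $\cat{M}(r_i, d_i, \alpha^{(i)})$ of the graded pieces; (ii) the dimensions of the parabolic extension spaces parametrizing the iterated filtrations, computed via parabolic Serre duality together with Riemann-Roch applied to the short exact sequence used for \eqref{eq:add}; and (iii) the dimension of the affine fiber of parabolic connections on a fixed $E_*$ modulo $\Aut{E_*}$, given by the Euler characteristic computation in \eqref{eq:dim}. Subtracting the resulting total from $\text{dim}\,\cat{M}_{pc}(r,d,\alpha) = 2r^2(g-1) + 2 + m(r^2-r)$ produces the codimension of $W_\tau$.

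The hard part is the extremal case $k = 2$ with $(r_1, r_2) = (r-1, 1)$ and $d_1$ chosen to place $p\mu(F_1)$ as close to $p\mu(E_*)$ as possible, where the dimension count is tightest; this is precisely where the hypotheses $r \geq 2$ and $g \geq 3$ intervene, since the leading $g$-dependent term in the discrepancy is of the form $2 r_1 r_2 (g-1)$. A useful observation is that the parabolic correction $m(r^2-r)/2$ enters symmetrically on both sides of the inequality via $\chi(\End{E}) = \chi(\ParENd{E_*}) + m(r^2-r)/2$ (recorded in \eqref{eq:add}--\eqref{eq:sky}), so the codimension calculation reduces to a bookkeeping variant of its non-parabolic counterpart \cite[Lemma 3.1]{BM7}, from which $\codim{W_\tau, \cat{M}_{pc}(r,d,\alpha)} \geq 2$ for every nontrivial $\tau$ follows.
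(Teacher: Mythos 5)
Your strategy coincides with the paper's: stratify $Z_{pc}$ by the Harder--Narasimhan data of the underlying parabolic bundle, bound each stratum by the dimension of the family of parabolic bundles with that HN data plus the dimension $-\chi(\ParENd{E_*})+1 = r^2(g-1)+1+\frac{m(r^2-r)}{2}$ of the space of isomorphism classes of parabolic connections on a fixed bundle (the computation leading to \eqref{eq:dim}), and subtract from $\dim \cat{M}_{pc}(r,d,\alpha)$. The paper gets the bundle-family bound $r^2(g-1)-(r-1)(g-2)+\frac{m(r^2-r)}{2}$ by the method of \cite[Lemma 3.1]{BM7}, which is exactly the graded-pieces-plus-extensions count you outline, and your observation that the parabolic correction $\frac{m(r^2-r)}{2}$ cancels in the comparison is precisely how the paper reduces to the non-parabolic bookkeeping.

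One quantitative claim in your sketch is off. Because every pair $(E_*,D)$ in the moduli space is a stable pair, its parabolic endomorphisms are scalars (Lemma \ref{lem:1.a}), so the space of connections modulo $\Aut{E_*}$ has dimension $-\chi(\ParENd{E_*})+1$ regardless of whether $E_*$ is stable; the fibre direction therefore contributes nothing to the codimension, which comes entirely from the bundle directions. The bound this method actually yields is $\codim{Z_{pc}, \cat{M}_{pc}(r, d, \alpha)} \geq (r-1)(g-2)+1$, not a discrepancy with leading term $2r_1r_2(g-1)$: were the doubled term correct, $g \geq 2$ would already suffice, whereas for $r=2$ the inequality $(r-1)(g-2)+1 \geq 2$ genuinely needs $g \geq 3$. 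This does not invalidate your approach, but the extremal-case estimate must be carried out with the un-doubled count.
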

\begin{proof}
Let $(E_*, D) \in Z_{pc}$. Then, $E_*$ is not stable.
Since $\alpha$ is generic, $E_*$ is not semi-stable.
Let 
$$ 0 = E^0_* \subset E^1_* \subset E^2_* \subset \cdots \subset E^{l-1}_* \subset E^l_* =  E_* $$
be the Harder-Narasimhan filtration of the parabolic vector bundle $E_*$.
The collection of pairs of integers 
$\{(\rk{E^i_*}, p\deg{E^i_*})\}_{i =1}^l$ is called the Harder-Narasimhan polygon of $E_*$ (see \cite[p.n.178]{SS}). 

Now, analogous to the techniques in \cite[Lemma 3.1]{BM7} (p.n. 303), the space of all isomorphism classes of parabolic  vector bundles over $X$ whose  Harder-Narasimhan polygon  coincides with the given  parabolic vector bundle $E_*$ is of dimension at most
$$r^2(g-1) - (r-1)(g-2) + \frac{m (r^2-r)}{2}.$$

We have already computed that  the dimension of the space of all isomorphic classes of parabolic connections , lying in $\cat{M}_{pc}(r, d, \alpha)$, on any given parabolic vector bundle 
$E'_*$ (assuming that parabolic connection exists on $E'_*$) which  is $$r^2(g-1)+1 + \frac{m(r^2-r)}{2}.$$ The subvariety of $\cat{M}_{pc}(r,d, \alpha)$ parametrizing all pairs of the form $(E'_*, D') \in \cat{M}_{pc}(r,d, \alpha)$ such that the Harder-Narasimhan polygon of the parabolic vector bundle $E'_*$ coincides with that of $E_*$
is at most of dimension
$$r^2(g-1) - (r-1)(g-2) + \frac{m (r^2-r)}{2} + r^2(g-1)+1 + \frac{m(r^2-r)}{2}$$
$$= 2 r^2 (g-1) - (r-1)(g-1) + 1 + m (r^2 -r).$$  

Since $\text{dim} \cat{M}_{pc}(r, d, \alpha) =  2r^2(g-1) +2 + m(r^2-r)$, we have 
$$\text{dim} \cat{M}_{pc}(r, d, \alpha) - [\,2 r^2 (g-1) - (r-1)(g-1) + 1 + m (r^2 -r)\,] = (r-1)(g-2)+1. $$
Thus, $\codim{Z_{pc}, \cat{M}_{pc}(r, d, \alpha)} \geq (r-1)(g-2) + 1.$
 \end{proof}

The morphism in \eqref{eq:a20.5} induces a morphism of 
 Picard groups
\begin{equation}
\label{eq:a25.1}
\iota^* : \Pic{\cat{M}_{pc}(r, d, \alpha)} \longrightarrow \Pic{\cat{M}'_{pc}(r, d, \alpha)}
\end{equation}
defined by restricting any line bundle $\eta$ over $\cat{M}_{pc}(r, d, \alpha)$ to $\cat{M}'_{pc}(r, d, \alpha)$.

Also, the morphism $\pi$ in \eqref{eq:a20.6} induces a morphism of Picard groups
\begin{equation}
\label{eq:a25.2}
\pi^* : \Pic{\cat{M}(r, d, \alpha)} \longrightarrow 
\Pic{\cat{M}'_{pc}(r, d, \alpha)}
\end{equation}
defined by pulling back of line bundles, that is, $\eta \mapsto \pi^* \eta$, where $\eta$ is a line bundle over 
$\cat{M}'_{pc}(r, d, \alpha)$.

\begin{theorem}
\label{thm:1}
 The morphisms defined in \eqref{eq:a25.1} and \eqref{eq:a25.2}
$$\Pic{\cat{M}(r, d, \alpha)} \xrightarrow{\pi^*} 
\Pic{\cat{M}'_{pc}(r, d, \alpha)} \xleftarrow{\iota^*} \Pic{\cat{M}_{pc}(r, d, \alpha)}$$
are isomorphisms.
\end{theorem}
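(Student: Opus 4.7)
The plan is to treat the two maps separately: establish that $\iota^{*}$ is an isomorphism via a codimension-two extension argument, and that $\pi^{*}$ is an isomorphism by compactifying the torsor $\pi$ and applying the projective bundle formula. For $\iota^{*}$, recall from Inaba's theorem (already invoked above) that $\cat{M}_{pc}(r, d, \alpha)$ is smooth, and by Lemma \ref{lemm:3} the complement $Z_{pc}$ has codimension at least $2$ (in fact at least $(r-1)(g-2)+1 \geq 2$ whenever $r \geq 2$ and $g \geq 3$; for $r=1$ there is nothing to check). On a smooth variety every line bundle on the complement of a closed subset of codimension $\geq 2$ extends uniquely to the whole space, so $\iota^{*}$ is an isomorphism.

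For $\pi^{*}$, the key input is Lemma \ref{lem:1}: the forgetful map $\pi \colon \cat{M}'_{pc}(r,d,\alpha) \to \cat{M}(r, d, \alpha)$ is a torsor under the cotangent bundle $T^{*}\cat{M}(r, d, \alpha)$. Let $\gamma \in \coh{1}{\cat{M}(r,d,\alpha)}{\Omega^{1}_{\cat{M}(r,d,\alpha)}}$ denote its classifying class. I would form the vector bundle $\cat{V}$ on $\cat{M}(r,d,\alpha)$ fitting in the extension
$$0 \longrightarrow T^{*}\cat{M}(r,d,\alpha) \longrightarrow \cat{V} \longrightarrow \struct{\cat{M}(r,d,\alpha)} \longrightarrow 0$$
classified by $\gamma$. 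Then $p \colon \p(\cat{V}) \to \cat{M}(r,d,\alpha)$ is a projective bundle containing $\h := \p(T^{*}\cat{M}(r,d,\alpha))$ as a smooth divisor, and the open complement $\p(\cat{V}) \setminus \h$ is $\cat{M}(r,d,\alpha)$-isomorphic to the torsor $\cat{M}'_{pc}(r,d,\alpha)$. This yields the natural smooth compactification invoked in the introduction.

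By the projective bundle formula, $\Pic{\p(\cat{V})} \cong p^{*}\Pic{\cat{M}(r,d,\alpha)} \oplus \Z \cdot [\struct{\p(\cat{V})}(1)]$, and the class $[\h]$ equals $[\struct{\p(\cat{V})}(1)]$ up to a pullback from $\cat{M}(r, d, \alpha)$. The excision sequence for the smooth divisor $\h \subset \p(\cat{V})$ reads
$$\Z \cdot [\h] \longrightarrow \Pic{\p(\cat{V})} \longrightarrow \Pic{\cat{M}'_{pc}(r,d,\alpha)} \longrightarrow 0,$$
which identifies $\Pic{\cat{M}'_{pc}(r,d,\alpha)}$ with $p^{*}\Pic{\cat{M}(r,d,\alpha)}$, giving the surjectivity of $\pi^{*}$. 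Injectivity is a consequence of $\pi$ being an affine-space fibration with connected fibres, so a pulled-back line bundle is trivial only if it was already so downstairs. Combining this with the isomorphism $\iota^{*}$ yields the theorem.

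The main obstacle will be verifying that the affine complement $\p(\cat{V}) \setminus \h$ is indeed $\cat{M}(r,d,\alpha)$-isomorphic to the connection torsor $\cat{M}'_{pc}(r,d,\alpha)$. Concretely, one must identify the Atiyah-type class governing $\pi$, read off from Lemma \ref{lem:1} through the duality \eqref{eq:a21.2}, with the extension class defining $\cat{V}$ in $\coh{1}{\cat{M}(r,d,\alpha)}{T^{*}\cat{M}(r,d,\alpha)}$. This matching is precisely the technique pioneered in \cite{BR} and adapted in \cite{AS1} for related moduli problems; transferring their argument to the present parabolic setting should be routine once the deformation-theoretic identifications of Section \ref{pre} are invoked.
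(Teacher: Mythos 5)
Your proposal is correct and follows essentially the same route as the paper: the codimension-$\geq 2$ argument (Lemma \ref{lemm:3} plus smoothness) for $\iota^*$, compactification of the $T^*\cat{M}(r,d,\alpha)$-torsor by a projective bundle with $\p(T^*\cat{M}(r,d,\alpha))$ as the hyperplane at infinity, the projective bundle formula together with restriction to the open complement for surjectivity of $\pi^*$, and descent of a trivializing section along the affine fibres for injectivity. The only difference is bookkeeping: the paper defines the compactifying bundle $\cat{W}$ directly as the bundle of fibrewise affine-linear functions on $\pi^{-1}(E_*)$, so the identification of $\p(\cat{W})\setminus {\bf H}$ with the torsor (your flagged ``main obstacle'') is tautological via the evaluation hyperplanes $H_{(E_*,D)}$ -- this $\cat{W}$ is just the dual of your extension $\cat{V}$ classified by the torsor class.
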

 \begin{proof} 
  The morphism
$\iota^*$ defined in \eqref{eq:a25.1} is an isomorphism follows from the fact that 
$\codim{Z_{pc}, \cat{M}_{pc}(r, d, \alpha)} \geq 2$ as proved in Lemma \ref{lemm:3}.

Now, we show that $\pi^*$ is an isomorphism.
From Lemma \ref{lem:1}, the moduli space $\cat{M}'_{pc}(r, d, \alpha)$ is $T^*\cat{M}(r, d, \alpha)$-torsor.
We use this fact to compactify the moduli space $\cat{M}'_{pc}(r, d, \alpha)$, and using this compactification, we show that the Picard group 
$\Pic{\cat{M}'_{pc}(r, d, \alpha)}$ is isomorphic 
to $\Pic{\cat{M}(r, d, \alpha)}$.

We have seen that 
for any $E_* \in \cat{M}(r, d, \alpha)$, the fibre 
$\pi^{-1}(E_*)$ is an affine space modelled on
 $\coh{0}{X}{\Omega^1_X(S) \otimes \SParENd{E_*}}$.
 We know that the dual of an affine space is a vector space, therefore  the dual $$\pi^{-1}(E_*)^{\vee} = \{\varphi: \pi^{-1}
 (E_*) \to \C  ~\vert~ \varphi~ \mbox{is an affine linear map} \}$$  is a vector space over $\C$. 
 
 Let $$\psi: \cat{W} \to \cat{M}(r, d, \alpha)$$ be the algebraic vector bundle such that for every
 Zariski open subset $U$ of $\cat{M}(r, d, \alpha)$, a section of $\cat{W}$ over $U$ is an  
 algebraic function $f: \pi^{-1}(U) \to \C$ whose restriction to each fiber 
 $\pi^{-1}(E_*)$,  is an element of $\pi^{-1}(E_*)^{\vee}$.

 Thus, a fibre
 $\cat{W}(E_*) = \psi^{-1}(E_*)$ of
 $\cat{W}$ at $E_* \in \cat{M}(r, d, \alpha)$ is $\pi^{-1}(E_*)^{\vee}$.  The dimension of 
 $\pi^{-1}(E_*)^{\vee}$ is equal to  $\text{dim}_{\C} \pi^{-1}(E_*) + 1$, and since $E_*$ is stable, the dimension of $\pi^{-1}(E_*)^{\vee}$ is equal to
 $$r^2(g-1)+\frac{m(r^2-r)}{2} +2,$$ follows from Lemma \ref{lem:a.2}.
 
  Let $(E_*,D) \in \cat{M}'_{pc}(r, d, \alpha)$,
 and define a map $$\Psi_{(E_*, D)}: \pi^{-1}(E_*)^{\vee} \to \C,$$ by $\Psi_{(E_*, D)}(\varphi) = \varphi[(E_*, D)]$, that is in fact the evaluation map. Now,
 the kernel $\Ker{\Psi_{(E_*, D)}}$ defines a hyperplane in $\pi^{-1}(E_*)^{\vee}$
 denoted by $H_{(E_*, D)}$. 
 
 Let $\p (\cat{W})$ be the projective bundle defined by hyperplanes in the fibre $\pi^{-1}(E_*)^{\vee}$, that is, we have 
 \begin{equation}
 \label{eq:a26}
 \tilde{\psi}: \p(\cat{W}) \to \cat{M}(r, d, \alpha)
 \end{equation}
 induced from $\psi$. 

Next, define a map 
\begin{equation}
\label{eq:a27}
i: \cat{M}'_{pc}(r, d, \alpha)
  \to \p(\cat{W})
\end{equation} by sending  $(E_*, D)$ to the equivalence class of  $H_{(E_*, D)}$,
  which is an open embedding. Set 
\begin{equation}
\label{eq:a28}
{\bf H} = \p (\cat{W}) \setminus \cat{M}'_{pc}(r,d,\alpha).
\end{equation}  
  Then $\tilde{\psi}^{-1}(E_*) \cap {\bf H} $ is a projective hyperplane in $\tilde{\psi}^{-1}(E_*)$ for every $E_* \in \cat{M}(r, d, \alpha)$, and hence ${\bf H}$ is a hyperplane at infinity.

 We first 
show that $\pi^*$ is injective.
Let $\eta \to 
\cat{M}(r, d, \alpha)$ be a line bundle such that $\pi^* \eta$ is a trivial line bundle
over $\cat{M}'_{pc}(r, d, \alpha)$. A trivialization of $\pi^* \eta$ is equivalent to have 
 a nowhere vanishing section of $\pi^* \eta$ over $\cat{M}'_{pc}(r, d, \alpha)$. Fix 
$s \in \coh{0}{\cat{M}'_{pc}(r, d, \alpha)}{\pi^* \eta}$ a nowhere vanishing section. Choose a
point $E_* \in \cat{M}(r, d, \alpha)$.  Then, from the following commutative diagram 

\begin{equation}
\label{eq:a29}
\xymatrix{
\pi^* \eta \ar[d] \ar[r]^{\tilde{\pi}} & \eta \ar[d] \\
\cat{M}'_{pc}(r, d, \alpha) \ar[r]^{\pi} & \cat{M}(r, d, \alpha)\\
}
\end{equation}
we get 
\begin{equation*}
\label{eq:a30}
s|_{\pi^{-1}(E_*)}: \pi^{-1}(E_*) \to \eta(E_*)
\end{equation*}
 a nowhere vanishing map. Notice that $\pi^{-1}(E_*) \cong \C^N$ and $\eta(E_*) \cong \C$, where $N = r^2(g-1)+\frac{m(r^2-r)}{2} + 1$. Now, any nowhere vanishing algebraic function on an affine space
$\C^N$ is a constant function, that is, $s|_{\pi^{-1}(E_*)}$ is a constant
function and hence corresponds to a non-zero vector $\alpha_{E_*} \in \eta(E_*)$.
Since $s$ is constant on each fiber of $\pi$,  the trivialization $s$ of $\pi^*\eta$ descends to a trivialization of the
line bundle $\eta$ over $\cat{M}(r, d, \alpha)$, and hence giving a nowhere vanishing
section of $\eta$ over $\cat{M}(r, d, \alpha)$.  Thus,  $\eta$ is a trivial line bundle
over $\cat{M}(r, d, \alpha)$.

 It remains to show that $\pi^*$ is surjective. Let 
 $\vartheta \to \cat{M}'_{pc}(r, d, \alpha)$ be an algebraic line 
 bundle.  Since $\p (\cat{W})$ is a smooth compactification of $\cat{M}'_{pc}(r, d, \alpha)$ follows from the embedding $i : \cat{M}'_{pc}(r, d, \alpha) \hookrightarrow
 \p (\cat{W}) $ in \eqref{eq:a27}, the homomorphism of Picard groups 
 $$i^* : \Pic{ \p (\cat{W})} \to \Pic{\cat{M}'_{pc}(r, d, \alpha)}$$ defined by pull back of line bundles via $i$ in \eqref{eq:a27}, is a surjective homomorphism. Therefore, 
 we can extend $\vartheta$ to a line bundle
 $\vartheta'$ over $\p (\cat{W})$.
 Again from the morphism $\tilde{\psi}: \p(\cat{W}) \to \cat{M}(r, d, \alpha)$ in \eqref{eq:a26}  and from \cite{H}, Chapter III, Exercise 12.5, p.n. 291,
we have
\begin{equation}
\label{eq:a31}
 \Pic {\p (\cat{W})} \cong \tilde{ \psi}^*\Pic{\cat{M}(r, d, \alpha)}\oplus  \Z \struct{\p (\cat{W})}(1).
\end{equation}
Therefore,
\begin{equation}
\label{eq:a32}
\vartheta' = \tilde{\psi}^* \Lambda \otimes \struct{\p (\cat{W})}(l)
\end{equation}
where $\Lambda$ is a line bundle over $\cat{M}$ and 
$l \in \Z$.
Since ${\bf H} = \p (\cat{W}) \setminus \cat{M}'_{pc}(r, d, \alpha)$ is the 
hyperplane at infinity, using \eqref{eq:a31} the line bundle 
$\struct{\p (\cat{W})}({\bf H})$ associated to the divisor ${\bf H}$ can be expressed
as 
\begin{equation}
\label{eq:a33}
\struct{\p (\cat{W})}({\bf H}) = \tilde{\psi}^* \Gamma \otimes \struct{\p (\cat{W})}(1)
\end{equation}
for some line bundle $\Gamma$ over $\cat{M}(r, d, \alpha)$.
Now, from \eqref{eq:a32} and \eqref{eq:a33}, we get
\begin{equation*}
\label{eq:a34}
\vartheta' = \tilde{\psi}^*(\Lambda \otimes 
(\Gamma^{\vee})^{\otimes l}) \otimes \struct{\p (\cat{W})}(l{\bf H}).
\end{equation*}
Since, the restriction of the line bundle $\struct{\p 
(\cat{W})}({\bf H})$ to the complement $$\p (\cat{W}) \setminus {\bf H} = 
\cat{M}'_{pc}(r, d, \alpha)$$ is the trivial line bundle and restriction of $\tilde{\psi}$ to $\cat{M}'_{pc}(r, d, \alpha)$ is the map $\pi$ defined in \eqref{eq:a21.1}, therefore, we have
\begin{equation*}
\label{eq:a35}
\vartheta =  \pi^*(\Lambda \otimes (\Gamma^{\vee})^{\otimes l}).
\end{equation*}
This completes the proof of the theorem.

\end{proof}

\begin{remark} \mbox{}
\label{rmk:1} There is an alternative way of proving  that $\pi^*$ as defined in \eqref{eq:a25.2} is an 
isomorphism, without using the compactification 
$\p (\cat{W})$ of $\cat{M}'_{pc}(r, d, \alpha)$.
Recall that if we have an algebraic morphism 
 between varieties $f : \cat{X} \longrightarrow \cat{S}$
 such that the fibres are geometrically connected, then 
 there exits an exact sequence
 \begin{equation}
 \label{eq:b23}
 1 \to \Pic{\cat{S}} \to \Pic{\cat{X}} \to \text{Pic}_{\cat{X}/\cat{S}}(\cat{S}) \to \text{Br}(\cat{S}),
 \end{equation}
 of groups, where $\text{Pic}_{\cat{X}/\cat{S}}(\cat{S})$ denote the group of section of relative Picard scheme 
 $\text{Pic}_{\cat{X/\cat{S}}}$ over $S$, and 
 $\text{Br}(\cat{S})$ denote the Brauer group of $\cat{S}$.
 In our set up, we have following exact sequence
 corresponding to the morphism $\pi$  defined in \eqref{eq:a21.1}
 \begin{equation}
 \label{eq:b24}
 1 \to \Pic{\cat{M}(r, d, \alpha)} \to \Pic{\cat{M}'_{pc}(r, d, \alpha)}
 \to \text{Pic}_{\cat{M}'_{pc}(r, d, \alpha)/ \cat{M}(r, d, \alpha
 )}(\cat{M}(r, d, \alpha))
 \end{equation}
 
 Since, each fibre of the morphism $\pi$ (see \eqref{eq:a21.1}) is an affine space, the group 
 $\text{Pic}_{\cat{M}'_{pc}(r, d, \alpha)/ \cat{M}(r, d, \alpha
 )}(\cat{M}(r, d, \alpha))$ is trivial. 

\end{remark}

Similarly, we can compactify (see Proposition \ref{prop:3}) the moduli space $\cat{M}'_{pc}(r, \alpha, \xi)$ and  we can show the following.
\begin{proposition}
\label{prop:1}
$\Pic{\cat{M}_{pc}(r, \alpha, \xi)} \cong \Pic{\cat{M}(r,
 \alpha, \xi)}.$
\end{proposition}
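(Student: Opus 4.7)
The plan is to mirror the proof of Theorem \ref{thm:1}, now adapted to the fixed-determinant setting. The strategy factors the desired isomorphism through the Picard group of the open subvariety $\cat{M}'_{pc}(r, \alpha, \xi)$ by establishing two isomorphisms: the restriction map $\iota_\xi^* : \Pic{\cat{M}_{pc}(r, \alpha, \xi)} \to \Pic{\cat{M}'_{pc}(r, \alpha, \xi)}$ and the pullback along the forgetful map $\pi_\xi^* : \Pic{\cat{M}(r, \alpha, \xi)} \to \Pic{\cat{M}'_{pc}(r, \alpha, \xi)}$ from \eqref{eq:a21.1}. Composing the inverse of the first with the second yields the claim.

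First I would establish the codimension-two estimate for $Z'_{pc} := \cat{M}_{pc}(r, \alpha, \xi) \setminus \cat{M}'_{pc}(r, \alpha, \xi)$, analogous to Lemma \ref{lemm:3}. For $(E_*, D) \in Z'_{pc}$, the bundle $E_*$ is not semistable (by genericity of $\alpha$), so one stratifies $Z'_{pc}$ by Harder-Narasimhan polygon. A dimension bound on the stratum of parabolic bundles with determinant $\xi$ having a prescribed Harder-Narasimhan polygon, combined with the dimension of the affine fibre of $\pi_\xi$ over such a bundle (computed exactly as in Lemma \ref{lem:a.2} but with $\Parend{E_*}$ and $\SParend{E_*}$ in place of $\ParENd{E_*}$ and $\SParENd{E_*}$, applying the trace-zero Riemann-Roch and parabolic Serre duality \eqref{eq:a21.3}), yields $\codim{Z'_{pc}, \cat{M}_{pc}(r, \alpha, \xi)} \geq 2$. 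From this, $\iota_\xi^*$ is an isomorphism by the standard fact that line bundles on a smooth variety extend uniquely across a closed subset of codimension at least two.

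Next, I would exploit the compactification $\p(\cat{V})$ of $\cat{M}'_{pc}(r, \alpha, \xi)$ from Proposition \ref{prop:3}, where $\cat{V} \to \cat{M}(r, \alpha, \xi)$ is the vector bundle whose fibre over $E_*$ is the dual of the affine space $\pi_\xi^{-1}(E_*)$ of parabolic connections on $E_*$ with fixed determinant $(\xi_*, D_{\xi_*})$, and the complement ${\bf H}_0 = \p(\cat{V}) \setminus \cat{M}'_{pc}(r, \alpha, \xi)$ is a smooth divisor at infinity. For injectivity of $\pi_\xi^*$: a nowhere-vanishing section of $\pi_\xi^* \eta$ is constant on each affine fibre of $\pi_\xi$ (any nowhere-vanishing algebraic function on an affine space is constant), hence descends to a trivialization of $\eta$ over $\cat{M}(r, \alpha, \xi)$. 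For surjectivity: any line bundle $\vartheta$ on $\cat{M}'_{pc}(r, \alpha, \xi)$ extends to a line bundle $\vartheta'$ on $\p(\cat{V})$ since the restriction $\Pic{\p(\cat{V})} \to \Pic{\cat{M}'_{pc}(r, \alpha, \xi)}$ is surjective; by the projective bundle formula one has $\Pic{\p(\cat{V})} \cong \tilde{\psi}_\xi^* \Pic{\cat{M}(r, \alpha, \xi)} \oplus \Z \cdot \struct{\p(\cat{V})}(1)$, so $\vartheta' = \tilde{\psi}_\xi^* \Lambda \otimes \struct{\p(\cat{V})}(l)$ for some $\Lambda$ and $l \in \Z$. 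Writing $\struct{\p(\cat{V})}({\bf H}_0) = \tilde{\psi}_\xi^* \Gamma \otimes \struct{\p(\cat{V})}(1)$ and restricting to the complement of ${\bf H}_0$ shows $\vartheta = \pi_\xi^*(\Lambda \otimes (\Gamma^{\vee})^{\otimes l})$.

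The main obstacle is the codimension estimate for $Z'_{pc}$, because one must carefully keep track of the determinant constraint both on the bundle side (bounding Harder-Narasimhan strata inside the fixed-determinant moduli space $\cat{M}(r, \alpha, \xi)$, of dimension $(r^2-1)(g-1) + \frac{m(r^2-r)}{2}$) and on the connection side (using trace-zero strongly parabolic endomorphisms). Once this is verified, the remainder of the argument is a formal analogue of the proof of Theorem \ref{thm:1}, as the paper itself indicates.
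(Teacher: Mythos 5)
Your proposal follows essentially the same route the paper intends: the paper proves Proposition \ref{prop:1} simply by invoking the fixed-determinant analogue of the argument for Theorem \ref{thm:1}, i.e.\ the compactification $\p(\cat{V})$ of Proposition \ref{prop:3} together with the codimension-two estimate and the injectivity/surjectivity argument for $\pi_\xi^*$, exactly as you outline. Your write-up is in fact more explicit than the paper's one-line proof, correctly flagging that the only genuinely new ingredient is the Harder--Narasimhan codimension bound carried out inside the fixed-determinant moduli space with trace-zero (strongly) parabolic endomorphisms.
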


\section{Space of holomorphic connections on an ample line bundle}
\label{Space_hol_conn}
 In this section, we assume that $r \geq 3$, and  consider the space of holomorphic 
 connections on an ample line bundle $L$ over the moduli space $\cat{M}(r, d, \alpha)$.
 For the simplicity of the notation, we just write 
 $\cat{M}$ for the moduli space $\cat{M}(r, d, \alpha)$.

 Let $L$ be an ample holomorphic vector bundle over 
 $\cat{M}$.
 Let $\Omega^1_{\cat{M}}$ denote the sheaf of holomorphic $1$-forms on $\cat{M}$. Consider the 
 space $\cat{C}(L)$ of holomorphic connections on 
 $L$, that is, for every analytic open subset $U$ of 
 $\cat{M}$, $\cat{C}(L) \vert_{U}$
 consists of following operators
 $$\nabla : L \vert_{U} \longrightarrow L \vert_{U} \otimes \Omega^1_{\cat{M}} \vert_{U}$$
 satisfying the Leibniz rule
 $$\nabla(fs) =f \nabla (s) + \text{d}f \otimes s,$$
 where $f$ is a holomorphic function on $U \subset \cat{M}$, and $s$ is a holomorphic section of $L$ over $U$.
 Then, there is a natural projection morphism 
 
 \begin{equation}
 \label{eq:a36}
 \Psi : \cat{C}(L) \longrightarrow \cat{M} :=\cat{M}(r, d, \alpha).
 \end{equation}

 For any analytic open subset $U$ of 
 $\cat{M}$, $\Psi^{-1}(U)$  is an affine space modelled over the vector space
 $\coh{0}{U}{\Omega^1_{\cat{M}}}.$
 Thus, the space $\cat{C}(L)$ is a $T^* \cat{M} $-torsor.  Now, using the similar technique
 as above we can show the following.
 
 \begin{proposition}
 \label{prop:2}
 $\Pic{\cat{C}(L)} \cong \Pic{\cat{M}}.$
 \end{proposition}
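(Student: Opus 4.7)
The plan is to adapt the compactification argument from the proof of Theorem \ref{thm:1} verbatim, with the projection $\Psi : \cat{C}(L) \to \cat{M}$ from \eqref{eq:a36} playing the role of the forgetful map $\pi : \cat{M}'_{pc}(r,d,\alpha) \to \cat{M}(r,d,\alpha)$. First I would construct an algebraic vector bundle $\psi : \cat{W} \to \cat{M}$ of rank $\dim[]{\cat{M}} + 1$ whose fibre at $m \in \cat{M}$ is the $\C$-vector space $\Psi^{-1}(m)^{\vee}$ of affine-linear functions $\Psi^{-1}(m) \to \C$; concretely, sections of $\cat{W}$ over a Zariski open $U \subset \cat{M}$ are algebraic functions $\Psi^{-1}(U) \to \C$ whose fibrewise restrictions are affine-linear. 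Forming the projective bundle of hyperplanes $\tilde{\psi} : \p(\cat{W}) \to \cat{M}$, the assignment $\nabla \mapsto H_{\nabla} := \Ker{\mathrm{ev}_{\nabla}}$, where $\mathrm{ev}_{\nabla}$ is evaluation at $\nabla$ on the fibre, defines an open embedding $i : \cat{C}(L) \hookrightarrow \p(\cat{W})$. The complement ${\bf H} := \p(\cat{W}) \setminus \cat{C}(L)$ meets every fibre of $\tilde{\psi}$ in a single projective hyperplane and is therefore a smooth divisor at infinity.

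For injectivity of the pullback $\Psi^{*} : \Pic{\cat{M}} \to \Pic{\cat{C}(L)}$: if $\eta \to \cat{M}$ is a line bundle with $\Psi^{*}\eta$ trivial, pick a nowhere vanishing section $s$ of $\Psi^{*}\eta$. Restricting $s$ to any fibre $\Psi^{-1}(m) \cong \C^{N}$, where $N = \dim[]{\cat{M}}$, yields a nowhere vanishing regular function on affine space, which must be a nonzero constant $\alpha_{m} \in \eta(m)$. Hence $s$ is constant along each fibre of $\Psi$ and descends to a nowhere vanishing section of $\eta$ over $\cat{M}$, trivializing $\eta$.

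For surjectivity, let $\vartheta$ be any algebraic line bundle on $\cat{C}(L)$. Since $\p(\cat{W})$ is smooth and $\cat{C}(L)$ is the complement of the Cartier divisor ${\bf H}$, the restriction map $i^{*} : \Pic{\p(\cat{W})} \to \Pic{\cat{C}(L)}$ is surjective (by extending divisors via Zariski closure), so $\vartheta$ extends to some $\vartheta'$ on $\p(\cat{W})$. The projective bundle formula (\cite{H}, III, Ex.~12.5) gives
$$\Pic{\p(\cat{W})} \cong \tilde{\psi}^{*}\Pic{\cat{M}} \oplus \Z \cdot \struct{\p(\cat{W})}(1),$$
so $\vartheta' = \tilde{\psi}^{*}\Lambda \otimes \struct{\p(\cat{W})}(l)$ and similarly $\struct{\p(\cat{W})}({\bf H}) = \tilde{\psi}^{*}\Gamma \otimes \struct{\p(\cat{W})}(1)$ for some line bundles $\Lambda, \Gamma$ on $\cat{M}$ and an integer $l$. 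Restricting to $\cat{C}(L) = \p(\cat{W}) \setminus {\bf H}$, where $\struct{\p(\cat{W})}({\bf H})$ is trivial and $\tilde{\psi}$ restricts to $\Psi$, produces $\vartheta \cong \Psi^{*}(\Lambda \otimes (\Gamma^{\vee})^{\otimes l})$.

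The only point requiring separate care is the identification $\struct{\p(\cat{W})}({\bf H}) = \tilde{\psi}^{*}\Gamma \otimes \struct{\p(\cat{W})}(1)$; this is forced by the projective bundle formula once one observes that ${\bf H}$ intersects each fibre of $\tilde{\psi}$ in a single hyperplane, so its class has $\struct{\p(\cat{W})}(1)$-degree one along the fibres and the correction lies in the pulled-back summand $\tilde{\psi}^{*}\Pic{\cat{M}}$. Everything else is formal and proceeds exactly as in the proof of Theorem \ref{thm:1}.
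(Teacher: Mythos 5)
Your proposal is correct and follows essentially the same route as the paper: the paper proves Proposition \ref{prop:2} by noting that $\cat{C}(L)$ is a $T^*\cat{M}$-torsor and then invoking exactly the compactification argument of Theorem \ref{thm:1} (affine-dual vector bundle, projectivization, hyperplane at infinity, injectivity via constancy of nowhere vanishing functions on affine fibres, surjectivity via the projective bundle formula), which is what you have written out.
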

 
 Next, we want to study the space of regular function on 
 $\cat{C}(L)$.
 We recall the definition of differential operators of finite order on $L$. 
 Let $k \geq 0$ be an integer.
 A differential operator of order $k$ on $L$ is a $\C$-linear 
map
\begin{equation}
\label{eq:a36.1}
Q: L \to L
\end{equation}
such that for every open subset $U$ of $\cat{M}$ and for every 
$f \in \struct{\cat{M}}(U)$, the bracket 
$$[Q|_U,f]:L|_U \to L|_U$$ defined as 
\begin{equation*}
\label{eq:a37}
[Q|_U,f]_V(s) = Q_V(f|_V s) - f|_V Q_V(s)
\end{equation*}
is a differential operator of order $k-1$, for every open subset $V$ of 
$U$, and for all $ s \in L(V)$,
where differential operator of order zero from $L$ to $L$ is just 
$\struct{\cat{M}}$-module homomorphism.

Let $\Diff[k]{\cat{M}}{L}{L}$ denote the set of all differential operators of order 
$k$.  Then 
$\Diff[k]{\cat{M}}{L}{L}$ is an 
$\struct{\cat{M}}(\cat{M})$-module. For every open subset $U$ of $\cat{M}$, 
$$U \mapsto \Diff[k]{\cat{M}}{L|_U}{L|_U}$$ is a sheaf of differential operator of 
order $k$ from $L|_U$ to 
$L|_U$. This sheaf is denoted by $\DIFF[k]{\cat{M}}{L}{L}$, which is locally free.
For $k \geq 0$, we denote by $\cat{D}^k(L)$ the vector bundle over $\cat{M}$ 
defined by the sheaf $\DIFF[k]{\cat{M}}{L}{L}$. Note that $\cat{D}^0(L) = 
\struct{\cat{M}}$ and we have following inclusion of vector bundles
\begin{equation}
\label{eq:a38}
\struct{\cat{M}} = \cat{D}^0(L) \subset \cdots \subset \cat{D}^k(L) \subset \cat{D}^{k+1}(L) \subset \cdots
\end{equation}                                                                        

Let $Q$ be a first order differential operator on $L$.
Define a map 
\begin{equation}
\label{eq:a39}
\sigma(Q) : \struct{\cat{M}} \longrightarrow \struct{\cat{M}} = \END{L}
\end{equation}
by $$\sigma(Q)(f) = [Q\vert_{U}, f]$$ for every open 
subset $U \subset \cat{M}$, and 
$f \in \struct{\cat{M}}(U)$. Then 
$\sigma(Q)$ is a $\C$-derivation, that is, $\sigma(Q)$
gives a section of $T\cat{M}$.
Thus, we get a short exact sequence of vector bundles,
called the {\bf Atiyah exact sequence} \cite{A}
\begin{equation}
\label{eq:a40}
0 \to \struct{M} \xrightarrow{\iota} \At{L} := \cat{D}^{1}(L) \xrightarrow{\sigma} T\cat{M} \to 0,
\end{equation}
where $\At{L}$ is called the Atiyah bundle of $L$.

 \begin{theorem}
 \label{thm:2} 
 Let $L$ be an ample line bundle over $\cat{M}$, and 
 $\struct{\cat{C}(L)}$ denote the sheaf of regular functions on $\cat{C}(L)$. Then
 \begin{equation}
 \label{eq:40.1}
 \coh{0}{\cat{C}(L)}{\struct{\cat{C}(L)}} = \C
 \end{equation}

 \end{theorem}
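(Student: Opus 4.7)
The plan is to adapt the compactification technique from the proof of Theorem \ref{thm:1} to the $T^*\cat{M}$-torsor $\Psi \colon \cat{C}(L) \to \cat{M}$. I would first introduce the vector bundle $\cat{U} \to \cat{M}$ whose fibre over $E_* \in \cat{M}$ is the space of affine-linear functionals on $\Psi^{-1}(E_*)$; the constant-function inclusion and the ``linear part'' map give a short exact sequence
$$
0 \to \struct{\cat{M}} \to \cat{U} \to T\cat{M} \to 0,
$$
whose extension class in $H^1(\cat{M}, \Omega^1_{\cat{M}})$ equals the $T^*\cat{M}$-torsor class of $\cat{C}(L)$, which coincides with the Atiyah class $c_1(L)$. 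Since the Atiyah sequence \eqref{eq:a40} has the same extension class, $\cat{U} \cong \At{L}$. Mapping $\nabla \in \cat{C}(L)$ to the hyperplane $\ker(\varphi \mapsto \varphi(\nabla)) \subset \At{L}$ yields an embedding $\cat{C}(L) \hookrightarrow \p(\At{L})$ whose complement is the smooth divisor $\mathbf{H} := \p(T\cat{M})$, consisting of those hyperplanes of $\At{L}$ that contain the subbundle $\struct{\cat{M}}$. Mimicking the computation leading to \eqref{eq:a33} gives $\struct{\p(\At{L})}(\mathbf{H}) \cong \struct{\p(\At{L})}(1)$.

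Combining this with the projection formula $\tilde\Psi_* \struct{\p(\At{L})}(k) \cong \cat{S}ym^k \At{L}$, every regular function on $\cat{C}(L)$ extends to a rational section on $\p(\At{L})$ with poles only along $\mathbf{H}$, so
$$
H^0(\cat{C}(L), \struct{\cat{C}(L)}) \;=\; \varinjlim_{k \geq 0} H^0(\p(\At{L}), \struct{\p(\At{L})}(k\mathbf{H})) \;=\; \varinjlim_{k \geq 0} H^0(\cat{M}, \cat{S}ym^k \At{L}).
$$
This reduces the theorem to the key vanishing lemma $H^0(\cat{M}, \cat{S}ym^k \At{L}) = \C$ for every $k \geq 0$. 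I would prove this by induction on $k$. The case $k = 0$ follows from $\cat{M}$ being smooth, projective and connected. For the inductive step, taking the $k$-th symmetric power of the Atiyah sequence produces the natural short exact sequence
$$
0 \to \cat{S}ym^{k-1}\At{L} \to \cat{S}ym^k \At{L} \to \cat{S}ym^k T\cat{M} \to 0,
$$
and the associated long exact sequence, combined with the inductive hypothesis $H^0(\cat{M}, \cat{S}ym^{k-1}\At{L}) = \C$, reduces the lemma to the injectivity of the connecting homomorphism $\delta_k \colon H^0(\cat{M}, \cat{S}ym^k T\cat{M}) \to H^1(\cat{M}, \cat{S}ym^{k-1} \At{L})$ for every $k \geq 1$.

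The hard part is precisely this injectivity, because the source of $\delta_k$ is genuinely nonzero: via the isomorphism $T^*\cat{M} \cong \cat{M}'_{Higgs}(r, d, \alpha)$ from \eqref{eq:a22.1} and the codimension bound \eqref{eq:a22.3} (available because $r \geq 3$), elements of $H^0(\cat{M}, \cat{S}ym^k T\cat{M})$ correspond to Hitchin-type functions, i.e. pullbacks of polynomial functions on the parabolic Hitchin base $\cat{H}_P$ along the proper Hitchin map $h_P$, restricted to the open part $\cat{M}'_{Higgs}$, and these are plentiful. The class governing $\delta_k$ is a natural multiple of the Atiyah class $c_1(L)$, and the role of ampleness of $L$ is precisely to guarantee that cup product with $c_1(L)$ annihilates no nonzero Hitchin integral. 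This positivity input, rather than the compactification or the formal cohomological reduction, is the technical core of the argument.
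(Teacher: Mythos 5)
Your reduction follows the paper's strategy almost step for step: compactify $\cat{C}(L)$ inside $\p(\At{L})$ with complement $\p(T\cat{M})$ (the paper gets this directly from the dual Atiyah sequence, realizing $\cat{C}(L)={\imath^*}^{-1}(1)\subset \At{L}^*$, rather than via your torsor-class comparison, but the identification $\cat{U}\cong\At{L}$ is sound), deduce $\coh{0}{\cat{C}(L)}{\struct{\cat{C}(L)}}=\varinjlim_{k}\coh{0}{\cat{M}}{\cat{S}ym^k\At{L}}$, and reduce, via the symmetric powers of the Atiyah sequence, to the injectivity of the connecting homomorphism $\delta_k$, which is governed by $c_1(L)$. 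Up to that point the proposal is correct and matches the paper.

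The problem is that at exactly the step you yourself identify as the technical core, you assert rather than prove: the claim that ampleness of $L$ "guarantees that cup product with $c_1(L)$ annihilates no nonzero Hitchin integral" is given no justification, and it is not a routine positivity statement. The relevant map is not bare cup product with $c_1(L)$ but its composition with the contraction $\cat{S}ym^kT\cat{M}\otimes T^*\cat{M}\to\cat{S}ym^{k-1}T\cat{M}$, and such a composition can a priori kill nonzero sections; ampleness alone does not yield injectivity. The paper's proof of this step is where the actual geometry enters: it identifies $\bigoplus_{k}\coh{0}{\cat{M}}{\cat{S}ym^kT\cat{M}}$ with algebraic functions on $T^*\cat{M}$, extends them to $\cat{M}_{Higgs}(r,d,\alpha)$ by Hartogs using the codimension bound \eqref{eq:a22.3} (this is where $r\geq 3$ is used), descends them to the Hitchin base $\cat{H}_P$ using properness of $h_P$ and the fact that its generic fibres are abelian varieties, replaces a function by the exact $1$-form $d\tilde g$ viewed as a vertical vector field along $h_P$, and finally shows the contracted cup product with $c_1(p^*L)$ of a nonzero such form is nonzero by restricting to a generic fibre $A\setminus F$ with $\codim{F,A}\geq 2$ and invoking $\coh{1}{A}{\struct{A}}\cong\coh{0}{A}{TA}$. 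In your proposal the Hitchin map appears only to argue that $\coh{0}{\cat{M}}{\cat{S}ym^kT\cat{M}}$ is nonzero, which is not needed for injectivity; the descent-to-$\cat{H}_P$ and restriction-to-abelian-fibre argument (or some substitute for it) is missing, so the proof as written is incomplete.
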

 \begin{proof} 
 Consider the dual of the  Atiyah exact sequence in \eqref{eq:a40}, that is, 
 \begin{equation}
\label{eq:a41}
0 \to T^*\cat{M} 
\xrightarrow{\sigma^*} 
\At{L}^* \xrightarrow{\imath^*}   \struct{\cat{M}} \to 0.
\end{equation}

Consider $\struct{\cat{M}}$ as the trivial line 
bundle $\cat{M} \times \C$. Let $$ \alpha : \cat{M} \to \cat{M} \times \C$$ be a
holomorphic section of the trivial line bundle defined by $x \mapsto (x,1)$.
Let $Y = \Img{\alpha} \subset \cat{M} \times \C$ be the image of $\alpha$. Then 
$$\cat{C}(L) = {\imath^*}^{-1}Y \subset \At{L}^*.$$

In fact, for every open subset $U \subset \cat{M}
$, a holomorphic section of $\cat{C}(L)|_{U}$ over $U$ gives a holomorphic
splitting of the Atiyah exact sequence \eqref{eq:a40}, 
associated to the holomorphic vector bundle $L|
_{U} \to U$. 
 For instance, suppose $\tau: U \to \cat{C}(L)|
_{U}$ is a holomorphic section. Then $\tau$ will be a 
holomorphic section of $\At{L}^*|_{U}$  over $U$, 
because $ \cat{C}(L) =
{\imath^*}^{-1}Y \subset \At{L}^*$. Since $$ 
 \tau \circ \imath =  \imath^*(\tau) = \id{U},$$ so we 
 get a holomorphic  splitting $\tau$ of the Atiyah 
 exact sequence \eqref{eq:a40}
 associated to $L|_{U}.$ Thus, $L|_{U}$ admits 
 a holomorphic connection. 
 Conversely, given any splitting of Atiyah exact sequence 
 \eqref{eq:a40} over an open subset $U \subset \cat{M}$, we get a holomorphic section of $\cat{C}(L)|_{U}$ over $U$.
 
Let 
 $\p(\At{L})$ be the projectivization of 
$\At{L}$, that is, $\p(\At{L})$ parametrises hyperplanes in $\At{L}$.
Let $\p({T \cat{M}})$ be the projectivization of the tangent bundle $T\cat{M}$. 
Notice that $\p(T\cat{M})$ is a subvariety of 
$\p (\At{L})$, and  $\p(T\cat{M})$ is the zero
locus of the of a section of the tautological line bundle
$\struct{\p (\At{L})}(1)$. Now, observe that
$$\cat{C}(L) = \p(\At{L}) \setminus \p(T \cat{M}).$$ 
So we have 
\begin{equation}
\label{eq:a42}
\coh{0}{\cat{C}(L)}{\struct{\cat{C}(L)}} =
\varinjlim_{k \geq 0} \coh{0}{\p (\At{L})}{\struct{\p \At{L}}(k)}.
\end{equation}

Since for any finite dimensional vector space $V$ over 
$\C$ and for every $k \geq 0$, we have 
$\coh{0}{\p(V)}{\struct{\p(V)}(k)} = \cat{S}ym^k(V)$, 
where $\cat{S}ym^k(V)$ denote the $k$-th symmetric powers of $V$. We get a natural isomorphism 
$$\coh{0}{\p (\At{L})}{ \struct{\p \At{L}}(k)} \cong \coh{0}{\cat{M}}{ \cat{S}ym^k\At{L}},$$
where $\cat{S}ym^k\At{L}$ denote the $k$-the symmetric powers of $\At{L}$,
and hence from \eqref{eq:a42}, we get
\begin{equation}
\label{eq:a43}
\coh{0}{\cat{C}(L)}{\struct{\cat{C}(L)}} = \varinjlim_{k} \coh{0}{\cat{M}}{ \cat{S}ym^k\At{L}}.
\end{equation}

The symbol operator 
\begin{equation*}
\label{eq:a44}
\sigma:\At{L} \to   T\cat{M}
\end{equation*}
as described  in  \eqref{eq:a39},
 induces a morphism 
\begin{equation*}
\label{eq:a45}
\cat{S}ym^k(\sigma): \cat{S}ym^k \At{L} \to 
\cat{S}ym^k T \cat{M}
\end{equation*}
on $k$-th symmetric powers of bundles.
In view of the following composition 
\begin{equation*}
\cat{S}ym^{k-1} \At{L} = \struct{\cat{M}} \otimes \cat{S}ym^{k-1} \At{L} \hookrightarrow 
\At{L} \otimes \cat{S}ym^{k-1} \At{L} \to 
\cat{S}ym^{k} \At{L},
\end{equation*}
we have 
\begin{equation*}
\label{eq:a46}
\cat{S}ym^{k-1} \At{L} \subset \cat{S}ym^k \At{L}
~~~ \mbox{for all}~ k \geq 1.
\end{equation*}
In fact, we have the symbol exact sequence associated to 
$L$ over
$\cat{M}$,
\begin{equation}
\label{eq:a47}
0 \to \cat{S}ym^{k-1} \At{L} \to \cat{S}ym^{k} \At{L} \xrightarrow{\cat{S}ym^k (\sigma)} \cat{S}ym^k T \cat{M} \to 0.
\end{equation}

In other words, we get a filtration 
\begin{equation*}
\label{eq:32}
0 \subset \cat{S}ym^0 \At{L} \subset \cat{S}ym^1 
\At{L} \subset \ldots \subset \cat{S}ym^{k-1} 
\At{L} \subset \cat{S}ym^k \At{L} \subset 
\ldots
\end{equation*}

 such that 
 \begin{equation}
 \label{eq:33}
 \cat{S}ym^{k} \At{L} / \cat{S}ym^{k-1} \At{L} 
 \cong \cat{S}ym^k T \cat{M}~~~ \mbox{for all}~ k 
 \geq 1.
 \end{equation}

To prove \eqref{eq:40.1}, it is enough to show that 
\begin{equation}
\label{eq:35}
\coh{0}{\cat{M}}{\cat{S}ym^{k-1}\At{L}}
\cong \coh{0}{\cat{M}}{\cat{S}ym^k\At{L}}
~~~ \mbox{for all}~ k \geq 1.
\end{equation}
%
%Since,
%\begin{equation*}
%\label{eq:36}
%\frac{\cat{S}^k \At{\Theta}}{ \cat{S}^{k-2} \At{\Theta}}
%\cong \frac{\cat{S}^k T \cat{U}_{L}(n,d)}{\cat{S}^{k-1} 
%T \cat{U}_{L}(n,d)},
%\end{equation*}
We have the following commutative diagram
\begin{equation}
\label{eq:cd1}
\xymatrix@C=2em{
0 \ar[r] & \cat{S}ym^{k-1} \At{L} \ar[d] \ar[r] & \cat{S}ym
^k \At{L} 
\ar[d] \ar[r]^{\sigma_k} &  \cat{S}ym^k T \cat{M} \ar[d] \ar[r] & 0 \\
0 \ar[r] &  \cat{S}ym^{k-1} T \cat{M} \ar[r] & 
\frac{\cat{S}ym^k \At{L}}{\cat{S}ym^{k-2} \At{L}} \ar[r] &  \cat{S}ym^k T \cat{M} \ar[r] & 0 
}
\end{equation}

which gives the following commutative 
diagram of long exact sequences
\begin{equation}
\label{eq:cd2}
\xymatrix@C=2em{
\cdots \ar[r] & \coh{0}{\cat{M}}{\cat{S}ym^k T 
\cat{M}} \ar[d] \ar[r]^{\delta'_{k}} & \coh{1}
{\cat{M}}{\cat{S}ym^{k-1}\At{L}} \ar[d] 
\ar[r] & \cdots  \\
\cdots \ar[r] & \coh{0}{\cat{M}}{\cat{S}ym^k T 
\cat{M}}       \ar[r]^{\delta_{k}} & \coh{1}
{\cat{M}}{ \cat{S}^{k-1} T \cat{M}} 
\ar[r] & \cdots }
\end{equation}
In order to show \eqref{eq:40.1}, in view of \eqref{eq:a43}, it is enough to prove that 
the boundary operator $\delta'_k$ is injective for all $k
\geq 1$ and   which is equivalent to showing that 
the boundary operator 
\begin{equation}
\delta_{k}: \coh{0}{\cat{M}}{  \cat{S}ym^k T \cat{M}}      \to  \coh{1}{\cat{M}}
{ \cat{S}ym^{k-1} T \cat{M}}
\end{equation}
is injective for every $k \geq 1$.

Further a connecting homomorphism can be expressed as the
cup product by the extension class of the corresponding
short exact  sequence.  Denote the extension class
of the following short exact sequence 
\begin{equation}
\label{eq:a49.1}
0 \to  \cat{S}ym^{k-1}(T \cat{M}) \to 
\frac{\cat{S}ym^k\At{L}}{\cat{S}ym^{k-2}\At{L}} \to \cat{S}ym^k(T \cat{M}) \to 0 
\end{equation}
by $\gamma_{k}$.
   
Moreover, $\gamma_{k}$ can be expressed in terms of the
first Chern class $c_1(L)$, because
the first Chern class of $c_1(L)$ is nothing  but
the extension class of the following {\bf Atiyah exact
sequence} (see \cite{A})

\begin{equation}
\label{eq:a53}
0 \to \struct{\cat{M}} \to \At{L} \xrightarrow{ \sigma} T \cat{M} \to 0,
\end{equation}
and the short exact sequence \eqref{eq:a47} is 
the $k$-th symmetric power of the Atiyah exact sequence 
\eqref{eq:a53}.
 
Thus, the connecting homomorphism $\delta_{k}$ can be described using the first Chern
class $c_1(L) \in \coh{1}{\cat{M}}
{ T^* \cat{M}}$ of the line bundle $L$. 

The cup product with $ kc_1(L)$ gives rise to a homomorphism
\begin{equation*}
\label{eq:a54}
\mu: \coh{0}{\cat{\cat{M}}}{ \cat{S}ym^k T \cat{M}} \to \coh{1}{\cat{M}}{\cat{S}ym^k T \cat{M} \otimes T^* \cat{M}}
\end{equation*}
Also, we have a canonical homomorphism of vector bundles
\begin{equation*}
\label{eq:a55}
\beta: \cat{S}ym^k T\cat{M} \otimes T^*\cat{M} \to  \cat{S}ym^{k-1}T \cat{M}
\end{equation*}
which induces a morphism of  \C-vector spaces
\begin{equation*}
\label{eq:a56}
\beta^*:\coh{1}{\cat{M}}{\cat{S}ym^k T \cat{M}
 \otimes T^* \cat{M}} \to \coh{1}
{\cat{M}}{\cat{S}ym^{k-1} T \cat{M}}.
\end{equation*}
So, we get a morphism
\begin{equation*}
\label{eq:a57}
\tilde{\mu} = \beta^* \circ \mu: \coh{0}{\cat{M}}{\cat{S}ym^k T 
\cat{M}} \to \coh{1}{\cat{M}}
{\cat{S}ym^{k-1} T \cat{M}}.
\end{equation*}
Then $\tilde{\mu} = \delta_{k}$.  Now, it 
is enough to show that $\tilde{\mu}$ is injective.

Consider the natural projection 
\begin{equation*}
\label{eq:a49}
p : T^*\cat{M} \longrightarrow \cat{M},
\end{equation*}
where $T^*\cat{M}$ denote the cotangent bundle of $\cat{M}$. Then, we have 
\begin{equation}
\label{eq:a50}
 \coh{0}{T^*\cat{M}}{\struct{T^*\cat{M}}} = \coh{0}{\cat{M}}{p_* \struct{T^*\cat{M}}}.
\end{equation}
Moreover, using the projection formula we have 
\begin{equation}
\label{eq:a51}
p_* \struct{T^*\cat{M}} = \bigoplus_{k \geq 0}  \cat{S}ym^{k} T\cat{M}.
\end{equation}
Using \eqref{eq:a50} and \eqref{eq:a51} we get
\begin{equation}
\label{eq:a52}
\coh{0}{T^*\cat{M}}{\struct{T^*\cat{M}}} = \bigoplus_{k \geq 0} \coh{0}{\cat{M}}{ \cat{S}ym^{k} T \cat{M}}
\end{equation}

Now, to compute $\coh{0}{T^*\cat{M}}{\struct{T^*\cat{M}}}$
we use the Hitchin fibration for the moduli space of 
strongly parabolic Higgs bundles as defined in \eqref{eq:a22.4}. Recall that for
any generic point $a \in \cat{H}_P$, we have  $h_P^{-1}(a) = A $, where
$A$ is some abelian variety (for more details see \cite{SWW}, \cite{BGL}), and we will be using this fact 
showing that $\tilde{\mu}$ is injective.

Let $g: T^*\cat{M} \to \C$ be an algebraic function. Since $T^*\cat{M}$ is an open subset of $\cat{M}_{Higgs }(r, d, \alpha)$ such that the complement has codimension  at least $2$ (see \eqref{eq:a22.3}), from
Hartog's theorem, the algebraic function $g$ is extended to an algebraic function $$\tilde{g} : \cat{M}_{Higgs}(r, d, \alpha) \to \C.$$  Then its restriction  $\tilde{g}|_{h_P^{-1}(a)}: h_P^{-1}(a) \to \C$ to $h_P^{-1}(a)$ is an algebraic function. Since $h_P^{-1}(a)$ is an abelian variety,  $\tilde{g}$ is a constant 
function. Therefore, on generic fibre $h^{-1}(a)$, $\tilde{g}$ is constant, and $h_P$ is proper, hence gives an algebraic function on
$\cat{H}_P$.    Thus, any algebraic function on $T^*\cat{M}$ descends to an algebraic function on $\cat{H}_P$.

Set $\cat{P} = \mbox{d}( \coh{0}{\cat{H}_P}
{\struct{\cat{H}_P}}) \subset  \coh{0}{\cat{H}_P}
{\Omega^1_{\cat{H}_P}}$ the space of all exact algebraic $1$-form.
Define a map 
\begin{equation}
\label{eq:a53.1}
\theta: \coh{0}{T^*\cat{M}}
{\struct{ T^* \cat{M}}} \to \cat{P}
\end{equation}
by $g \mapsto d \tilde{g}$, where $\tilde{g}$ is the function which is defined by descent of $g$.  Then $\theta$ is an isomorphism.

From \eqref{eq:a52} and \eqref{eq:a53.1}, we have
\begin{equation}
\label{eq:a54.2}
\theta:\bigoplus_{k \geq 0}
\coh{0}{\cat{M}}
{\cat{S}ym^{k} T \cat{M}} \to \cat{P}
\end{equation}
which is an isomorphism.
Let us denote the restriction of $h_P$ on $T^*\cat{M}$ by  $h'_{P}$.

Let $T_{h'_P} = T_{T^* \cat{M} / \cat{H}_P} = \SKer{\mbox{d}h'_P}$
be the relative tangent sheaf on $T^*\cat{M}$,
where $$\mbox{d}h'_P: T(T^*\cat{M}) \to {h'}_P^*T \cat{H}_P$$ morphism 
of bundles.

 Note that 
$ \coh{0}{\cat{H}_P}{\Omega^1_{\cat{H}_P}} \subset  \coh{0}{T^*\cat{M}}
{T_{h'_P}} $, and hence from \eqref{eq:a54.2}, we have
an injective homomorphism
\begin{equation*}
\label{eq:a55.1}
\nu: \cat{P} = \bigoplus_{k \geq 0} \theta(
\coh{0}{\cat{M}}
{\cat{S}ym^{k} T \cat{M}}) \to \coh{0}{T^*\cat{M}}{T_{h'_P}}.
\end{equation*}

Consider the morphism 

$$\coh{0}{T^*\cat{M}}{T_{h'_P}} \to \coh{1}{T^*\cat{M}}{T_{h'_P} \otimes T^* T^* \cat{M}}$$ defined 
by taking cup product with the first Chern class $c_1(p^* L) \in  \coh{1}{T^*\cat{M}}{ T^* T^* \cat{M}}$.

Using the pairing $$T_{h'_P} \otimes T^* T^* \cat{M} \to \struct{T^* \cat{M}},$$ we get a homomorphism
\begin{equation*}
\label{eq:a56.1}
\eta: \coh{0}{T^*\cat{M}}{T_{h'_P}} \to \coh{1}{T^*\cat{M}}{\struct{T^*\cat{M}}}
\end{equation*}

Since $c_1(p^* L) = p^*c_1 (L)$, we have
\begin{equation}
\label{eq:a57.1}
k \eta \circ \nu \circ \theta(\omega_k) = \tilde{\mu}(\omega_k),
\end{equation}
for all $\omega_k \in \coh{0}{\cat{M}}
{\cat{S}ym^{k} T \cat{M}})$.
Since $\nu$ and $\theta$ are injective homomorphisms, it is enough to 
show that $\eta|_{\nu(\cat{P})}$ is injective homomorphism.

Let $\omega \in \cat{P} \setminus \{0\}$ be a non-zero
exact $1$-form. Choose $a \in \cat{H}_P$ such that $\omega(a) \neq 0$. 
In view of \eqref{eq:a22.1} and \eqref{eq:a22.3}, the generic fibre has the form
 $${h'_P}^{-1}(a) = A \setminus F,$$ where $A$ is an abelian variety and $F$ is a subvariety of $A$ such that $\mbox{codim}(F,A) \geq 2$.
Now, $\eta(\nu(\omega)) \in \coh{1}{T^*\cat{M}}{\struct{T^*\cat{M}}}$ and we have restriction map $$\coh{1}{T^*\cat{M}}{\struct{T^*\cat{M}}} \to \coh{1}{{h'_P}^{-1}(a)}{\struct{{h'_P}^{-1}(a)}}.$$
Since $ \omega(a) \neq 0$, $\eta(\nu(\omega)) \in \coh{1}{{h'_P}^{-1}(a)}{\struct{{h'_P}^{-1}(a)}}$.
Because of the following isomorphisms
\begin{equation*}
\label{eq:a58}
 \coh{1}{{h'_P}^{-1}(a)}{\struct{{h'_P}^{-1}(a)}} \cong  \coh{1}{A}{\struct{A}} \cong  \coh{0}{A}{TA},
\end{equation*}
it follows that $\eta(\nu(\omega)) \neq 0$.
This completes the proof. 
\end{proof}

Similar steps  involved in the above Theorem \ref{thm:2}, and using \eqref{eq:a38}, we have generalisation of 
\cite[Corollary 2.3]{AS1} in parabolic set up. 

\begin{corollary}
\label{cor:1}
Let $L$ be an ample line bundle over $\cat{M}(r, d, \alpha)$. Then,
\begin{equation*}
\label{eq:dif}
\coh{0}{\cat{M}}{\cat{D}^k(L)} = \C
\end{equation*}
for every $k \geq 0$.
\end{corollary}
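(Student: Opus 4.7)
The plan is to argue by induction on $k$, using the order filtration \eqref{eq:a38} together with the symbol exact sequence, and to reduce the injectivity of the relevant connecting homomorphism to the one already proved in Theorem \ref{thm:2}. For the base case $k = 0$, note that $\cat{D}^0(L) = \struct{\cat{M}}$ and that $\cat{M} = \cat{M}(r,d,\alpha)$ is smooth, irreducible, and projective (since $\alpha$ is generic), so $\coh{0}{\cat{M}}{\struct{\cat{M}}} = \C$.

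For the inductive step, assume $\coh{0}{\cat{M}}{\cat{D}^{k-1}(L)} = \C$ for some $k \geq 1$. The symbol exact sequence
\begin{equation*}
0 \longrightarrow \cat{D}^{k-1}(L) \longrightarrow \cat{D}^{k}(L) \xrightarrow{\sigma_k} \cat{S}ym^k T\cat{M} \longrightarrow 0
\end{equation*}
yields in cohomology the exact sequence
\begin{equation*}
0 \to \C \to \coh{0}{\cat{M}}{\cat{D}^{k}(L)} \to \coh{0}{\cat{M}}{\cat{S}ym^k T\cat{M}} \xrightarrow{\partial_k} \coh{1}{\cat{M}}{\cat{D}^{k-1}(L)},
\end{equation*}
so it suffices to show that $\partial_k$ is injective. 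To this end, I would push forward the symbol exact sequence along the quotient map $\cat{D}^{k-1}(L) \twoheadrightarrow \cat{D}^{k-1}(L)/\cat{D}^{k-2}(L) = \cat{S}ym^{k-1} T\cat{M}$ (interpreting $\cat{D}^{-1}(L)=0$, so that the $k=1$ case reduces to the Atiyah sequence \eqref{eq:a40} itself). This produces a morphism of short exact sequences whose lower row is
\begin{equation*}
0 \to \cat{S}ym^{k-1}T\cat{M} \to \cat{D}^{k}(L)/\cat{D}^{k-2}(L) \to \cat{S}ym^k T\cat{M} \to 0,
\end{equation*}
inducing a commutative square relating $\partial_k$ to the connecting map $\widetilde{\partial}_k : \coh{0}{\cat{M}}{\cat{S}ym^k T\cat{M}} \to \coh{1}{\cat{M}}{\cat{S}ym^{k-1}T\cat{M}}$ via the symbol map on the right. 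Injectivity of $\widetilde{\partial}_k$ therefore forces injectivity of $\partial_k$, and the induction closes.

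The remaining step, which I expect to be the main technical obstacle, is to identify $\widetilde{\partial}_k$ with the map $\widetilde{\mu}$ shown to be injective in Theorem \ref{thm:2}. Both are connecting homomorphisms for an extension in $\mathrm{Ext}^1(\cat{S}ym^k T\cat{M},\cat{S}ym^{k-1}T\cat{M})$ whose associated graded pieces are the same, and both extensions are controlled by the Atiyah class $c_1(L)$ through the Leibniz rule (yielding the factor of $k$ that was explicitly computed for $\gamma_k$ in \eqref{eq:a49.1}); I would expect the two classes to agree up to a nonzero scalar, which is all that is needed. Once this comparison of extension classes is carried out, the injectivity of $\widetilde{\mu}$ established via the parabolic Hitchin fibration in Theorem \ref{thm:2} transfers directly to $\widetilde{\partial}_k$, hence to $\partial_k$, and the inductive step yields $\coh{0}{\cat{M}}{\cat{D}^k(L)} = \C$ for every $k \geq 0$.
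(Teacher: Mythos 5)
Your overall architecture — induction on $k$ via the symbol sequence for $\cat{D}^k(L)$, reduction to injectivity of the connecting homomorphism, and comparison with a two-step quotient — is exactly the route the paper intends (its proof of this corollary is only the assertion that the steps of Theorem \ref{thm:2} go through using the filtration \eqref{eq:a38}), and your base case, the reduction to injectivity of $\partial_k$, and the case $k=1$ (where the relevant sequence is precisely the Atiyah sequence \eqref{eq:a40}, with class $c_1(L)$) are fine. The genuine gap is at the step you flag as the "main technical obstacle": it is not true in general that the extension class of $0 \to \cat{S}ym^{k-1}T\cat{M} \to \cat{D}^{k}(L)/\cat{D}^{k-2}(L) \to \cat{S}ym^{k}T\cat{M} \to 0$ agrees up to a nonzero scalar with the class $\gamma_k$ of \eqref{eq:a49.1}. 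When one transports the subprincipal part of a $k$-th order operator across a coordinate change, second derivatives of the coordinates enter, not only the transition functions of $L$; consequently this extension class is a multiple of the contracted $c_1(L)$ \emph{plus} an $L$-independent term coming from the Atiyah class of $T\cat{M}$, i.e.\ from the obstruction to a holomorphic affine connection on $\cat{M}$. A quick test shows the discrepancy: for $L = \struct{\cat{M}}$ the bundle $\cat{S}ym^{k}\At{L}/\cat{S}ym^{k-2}\At{L}$ splits (the Atiyah sequence splits), whereas already for $k=2$ the quotient $\cat{D}^{2}(\struct{})/\cat{D}^{0}(\struct{})$ is a non-split extension of $\cat{S}ym^{2}T$ by $T$ on any variety with no holomorphic affine connection (e.g.\ on a curve of genus at least $2$ its class is a nonzero multiple of the Atiyah class of the tangent bundle). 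Since the two classes differ by a term independent of $L$, "both are controlled by $c_1(L)$" is not a valid reason for proportionality, and injectivity of $\tilde{\mu}$ does not transfer to your $\widetilde{\partial}_k$ as claimed.

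So for $k \geq 2$ the induction is not closed. To repair it you would have to do one of the following: compute the full class of $\cat{D}^{k}(L)/\cat{D}^{k-2}(L)$ (contracted $c_1(L)$ plus the contracted Atiyah class of $T\cat{M}$) and rerun the Hitchin-fibration argument of Theorem \ref{thm:2} with this class, showing the extra term does not kill injectivity on $\coh{0}{\cat{M}}{\cat{S}ym^{k}T\cat{M}}$; or show that the relevant contraction of the Atiyah class of $T\cat{M}$ pairs to zero with the image of $\coh{0}{\cat{M}}{\cat{S}ym^{k}T\cat{M}}$; or argue directly with the connecting map into $\coh{1}{\cat{M}}{\cat{D}^{k-1}(L)}$ rather than its pushforward into $\coh{1}{\cat{M}}{\cat{S}ym^{k-1}T\cat{M}}$. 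As it stands, the identification of the higher extension classes is a missing ingredient and not a routine verification, so the proposal does not yet prove the corollary.
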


\begin{remark} \mbox{}
\label{rmk:2}
\begin{enumerate}
\item Note that the above 
 Theorem \ref{thm:2} is equivalent to proving 
 $$\coh{0}{\cat{M}}{\cat{S}ym^k\At{L}} = \C$$
 for every $k \geq 0$.

\item Since $\cat{M}'_{pc}(r, d, \alpha)$ and $\cat{C}(L)$ are 
$T^*\cat{M}(r, d, \alpha)$-torsors,  there is a natural 
question to ask,
\begin{question}
\label{q:1}
Does there exist an ample line bundle $L$ over $\cat{M}(r, d, \alpha)$ and an isomorphism 
$$\varpi : \cat{M}'_{pc}(r, d, \alpha) \longrightarrow \cat{C}(L)$$ of varieties 
such that the following diagram 
\begin{equation*}
\xymatrix{
\cat{M}'_{pc}(r, d, \alpha) \ar[r]^{\varpi} \ar[d]^{\pi} & \cat{C}(L) \ar[dl]^{\Psi} \\
\cat{M}(r, d, \alpha) \\
}
\end{equation*}
commutes ?
\end{question}

If the answer of the Question \ref{q:1} is in affirmative, then from the Theorem \ref{thm:2}, the moduli space $\cat{M}_{pc}(r, d, \alpha)$ does not admit 
any non-constant algebaric function.

\end{enumerate}
\end{remark}

\section{Divisor at infinity}
In \cite{BR}, compactification of the moduli space of 
logarithmic connections has been described, and authors have studied the some important properties, like numerically effectiveness,  of the smooth 
divisor at infinity.

From the proof of the Theorem \ref{thm:1} in section 
\ref{Pic}, we have a natural compactification 
$\p (\cat{W})$ of the moduli space $\cat{M}'_{pc}(r,d,
\alpha)$ such that the complement $\p (\cat{W}) 
\setminus \cat{M}'_{pc}(r,d,\alpha)$ is a smooth divisor 
${\bf H}$ (see \eqref{eq:a28}) at infinity.

In present section, we compactify the moduli space 
$\cat{M}'_{pc}(r,\alpha, \xi)$ as described in 
\eqref{eq:a21}, and study the properties of the smooth 
divisor at infinity.

\begin{proposition}
\label{prop:3}
There exists an algebraic vector bundle 
\begin{equation}
\label{eq:a59}
\Psi :\cat{V} \longrightarrow \cat{M}(r,\alpha, \xi)
\end{equation}
such that 
$\cat{M}'_{pc}(r, \alpha, \xi)$ is embedded in $\p(\cat{V})$ with $$
{\bf H_0} = 
\p (\cat{V}) \setminus \cat{M}'_{pc}(r, \alpha, \xi)$$ as a smooth divisor at infinity.
\end{proposition}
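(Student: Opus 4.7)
The plan is to mimic the construction of the compactification $\p(\cat{W})$ used in the proof of Theorem \ref{thm:1}, replacing the forgetful morphism $\pi$ by $\pi_\xi$ from \eqref{eq:a21.1} and using Lemma \ref{lem:2} in place of Lemma \ref{lem:1}. The key structural fact we rely on is that $\cat{M}'_{pc}(r,\alpha,\xi)$ is a $T^*\cat{M}(r,\alpha,\xi)$-torsor over $\cat{M}(r,\alpha,\xi)$ via $\pi_\xi$, with fibre over $E_*$ the affine space $\pi_\xi^{-1}(E_*)$ modelled on $\coh{0}{X}{\Omega^1_X(S)\otimes\SParend{E_*}}$.

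First I would construct the vector bundle $\cat{V}$. For each $E_* \in \cat{M}(r,\alpha,\xi)$, let $\pi_\xi^{-1}(E_*)^\vee$ denote the $\C$-vector space of affine-linear maps $\pi_\xi^{-1}(E_*) \to \C$; this has dimension $\dim_\C \pi_\xi^{-1}(E_*) + 1 = (r^2-1)(g-1) + \tfrac{m(r^2-r)}{2} + 1$. Define $\Psi : \cat{V} \to \cat{M}(r,\alpha,\xi)$ as the algebraic vector bundle whose sections over a Zariski open $U$ are algebraic functions $f : \pi_\xi^{-1}(U) \to \C$ whose restriction to each fibre is affine-linear, exactly as in the construction of $\cat{W}$ in the proof of Theorem \ref{thm:1}. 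That $\cat{V}$ is actually an algebraic vector bundle (i.e., that the construction is Zariski-locally trivial) follows from the existence of local sections of $\pi_\xi$, which in turn follows from the fact that $\pi_\xi$ is a torsor under the algebraic vector bundle $T^*\cat{M}(r,\alpha,\xi)$.

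Next I would build the embedding into the projective bundle $\tilde{\Psi} : \p(\cat{V}) \to \cat{M}(r,\alpha,\xi)$ of hyperplanes in the fibres of $\cat{V}$. For each $(E_*, D) \in \cat{M}'_{pc}(r,\alpha,\xi)$, the evaluation map $\Psi_{(E_*, D)} : \pi_\xi^{-1}(E_*)^\vee \to \C$, $\varphi \mapsto \varphi((E_*,D))$, is a surjective linear functional, so its kernel $H_{(E_*,D)} \subset \pi_\xi^{-1}(E_*)^\vee$ is a hyperplane. Define
\begin{equation*}
i : \cat{M}'_{pc}(r,\alpha,\xi) \hookrightarrow \p(\cat{V}), \qquad (E_*, D) \mapsto [H_{(E_*, D)}].
\end{equation*}
The map $i$ is an algebraic open embedding whose image in each fibre $\tilde{\Psi}^{-1}(E_*) \cong \p^{N}$ is the affine open complement of a single hyperplane, namely the hyperplane parametrising those linear functionals on $\pi_\xi^{-1}(E_*)^\vee$ that vanish on constants (equivalently, the projectivisation of the hyperplane of affine-linear maps with zero constant term).

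Finally I would set ${\bf H_0} = \p(\cat{V}) \setminus i(\cat{M}'_{pc}(r,\alpha,\xi))$. By the previous paragraph, ${\bf H_0}$ meets every fibre of $\tilde{\Psi}$ in a projective hyperplane, so it is a projective subbundle of $\p(\cat{V})$ of relative codimension one, hence a smooth divisor in the smooth variety $\p(\cat{V})$. The main thing to double-check is the first step, namely the existence of $\cat{V}$ as a genuine algebraic vector bundle over $\cat{M}(r,\alpha,\xi)$ and not merely a Zariski sheaf; this is the only nontrivial point, but it is handled exactly as in the $\text{GL}(r,\C)$ case of Theorem \ref{thm:1}, using the torsor structure from Lemma \ref{lem:2} to produce local trivialisations.
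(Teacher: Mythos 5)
Your proposal follows essentially the same route as the paper: the paper constructs $\cat{V}$ as exactly this bundle of fibrewise affine--linear functionals on $\pi_\xi^{-1}(U)$ and then simply invokes the corresponding part of the proof of Theorem \ref{thm:1} (the evaluation-map embedding $i$, the hyperplane at infinity in each fibre, and hence smoothness of ${\bf H}_0$), all of which you spell out explicitly and correctly, including the use of the torsor structure from Lemma \ref{lem:2} for local triviality of $\cat{V}$. No substantive differences or gaps.
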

\begin{proof}
Recall that the map $\pi_{\xi}$ defined in \eqref{eq:a21.1} is a $T^*\cat{M}(r, \alpha, \xi)$-torsor.  
Let 
\begin{equation*}
\label{eq:51}
\Psi: \cat{V} \to \cat{M}(r, \alpha, \xi)
\end{equation*}
 be the algebraic vector bundle such that for every
 Zariski open subset $U$ of $\cat{M}(r, \alpha, \xi)$, a section of $\cat{V}$ over $U$ is an  
 algebraic function $f: \pi_{\xi}^{-1}(U) \to \C$ whose restriction to each fiber 
 $\pi_{\xi}^{-1}(E_*)$,  is an element of $\pi_{\xi}^{-1}(E_*)^{\vee}$. 
 Now, the rest of the proof is exactly similar to a part of the proof of the Theorem \ref{thm:1}.
 \end{proof}

Now, we generalise 
\cite[proposition 5.1]{BR} in parabolic set up.

\begin{proposition}
\label{prop:4}
Consider the smooth divisor $\bf{H}_0$ defined in Proposition \ref{prop:3}. Then,
the divisor $\bf{H}_0$ is numerically effective if and only if the tangent bundle $T \cat{M}(r, \alpha, \xi)$ is numerically effective. 

\end{proposition}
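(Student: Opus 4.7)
My plan is to parallel the proof of Proposition 5.1 of \cite{BR} by first identifying the vector bundle $\cat{V}$ as an Atiyah-type extension and then pinning down the divisor class of ${\bf H}_0$. From the construction in Proposition \ref{prop:3}, the fibre of $\cat{V}$ over $E_* \in \cat{M}(r, \alpha, \xi)$ is the space $\pi_{\xi}^{-1}(E_*)^{\vee}$ of affine linear functionals on $\pi_{\xi}^{-1}(E_*)$. Since by Lemma \ref{lem:2} the fibre $\pi_{\xi}^{-1}(E_*)$ is an affine space modelled on $T^*_{E_*}\cat{M}(r, \alpha, \xi)$, the splitting of an affine linear functional into its constant term and its linear part globalises to a short exact sequence of vector bundles on $\cat{M}(r, \alpha, \xi)$,
\begin{equation*}
0 \to \struct{\cat{M}(r, \alpha, \xi)} \to \cat{V} \to T\cat{M}(r, \alpha, \xi) \to 0,
\end{equation*}
in which the constant functionals form the trivial subbundle.

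I would next identify ${\bf H}_0$ inside $\p(\cat{V})$. The embedding $i$ appearing in the proof of Proposition \ref{prop:3} sends $(E_*, D)$ to the hyperplane consisting of affine linear functionals vanishing at $(E_*, D)$, and a nonzero constant functional never vanishes. Hence ${\bf H}_0$ is precisely the locus of hyperplanes of $\cat{V}$ that contain the trivial subbundle $\struct{\cat{M}(r, \alpha, \xi)}$; via the quotient map in the above extension, these are in bijection with hyperplanes in $T\cat{M}(r, \alpha, \xi)$, giving a canonical identification ${\bf H}_0 \cong \p(T\cat{M}(r, \alpha, \xi))$. Composing the inclusion $\Psi^*\struct{\cat{M}(r, \alpha, \xi)} \hookrightarrow \Psi^*\cat{V}$ with the tautological surjection $\Psi^*\cat{V} \twoheadrightarrow \struct{\p(\cat{V})}(1)$ produces a section of $\struct{\p(\cat{V})}(1)$ whose zero locus is exactly ${\bf H}_0$, yielding
\begin{equation*}
\struct{\p(\cat{V})}({\bf H}_0) \cong \struct{\p(\cat{V})}(1).
\end{equation*}

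Consequently, ${\bf H}_0$ is numerically effective if and only if $\struct{\p(\cat{V})}(1)$ is nef on $\p(\cat{V})$, which by definition is equivalent to $\cat{V}$ being a nef vector bundle. One direction of the remaining equivalence is automatic: $T\cat{M}(r, \alpha, \xi)$ is a quotient of $\cat{V}$, and quotients of nef bundles are nef. For the converse, I would exploit the fact that $\struct{\p(\cat{V})}(1) = \struct{\p(\cat{V})}({\bf H}_0)$ is effective. For any irreducible curve $C \subset \p(\cat{V})$ not contained in ${\bf H}_0$, the intersection $\struct{\p(\cat{V})}(1) \cdot C = {\bf H}_0 \cdot C$ is non-negative by effectivity; for $C \subset {\bf H}_0$, one has $\struct{\p(\cat{V})}(1) \cdot C = \struct{\p(T\cat{M}(r, \alpha, \xi))}(1) \cdot C$, which is non-negative as soon as $T\cat{M}(r, \alpha, \xi)$ is nef.

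The main technical step is the identification of $\cat{V}$ with the affine-dual bundle fitting into the Atiyah-type exact sequence and the resulting description of ${\bf H}_0$ as the vanishing locus of the canonical section of $\struct{\p(\cat{V})}(1)$. Once these are established, the nefness equivalence follows formally from the effectivity of this section together with the standard description of nefness of vector bundles via their projectivisations.
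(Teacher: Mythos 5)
Your argument is correct, and it reaches the same geometric core as the paper's proof --- the identification of ${\bf H}_0$ with $\p(T\cat{M}(r,\alpha,\xi))$ together with the matching of tautological data --- but it packages that core differently. The paper argues via the Poincar\'e adjunction formula: nefness of the effective divisor ${\bf H}_0$ is reduced to nefness of the normal bundle $\cat{N}_{\p(\cat{V})/{\bf H}_0}$, and this normal bundle is computed fibrewise, using the differential $f \mapsto \mathrm{d}f$ of affine-linear functionals, to be $\struct{\p(T\cat{M}(r,\alpha,\xi))}(1)$ under the canonical isomorphism ${\bf H}_0 \cong \p(T\cat{M}(r,\alpha,\xi))$. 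You instead globalize first: the extension $0 \to \struct{\cat{M}(r,\alpha,\xi)} \to \cat{V} \to T\cat{M}(r,\alpha,\xi) \to 0$ and the induced canonical section of $\struct{\p(\cat{V})}(1)$ cutting out ${\bf H}_0$ give the stronger global statement $\struct{\p(\cat{V})}({\bf H}_0) \cong \struct{\p(\cat{V})}(1)$, from which the paper's normal-bundle identity follows by restriction and adjunction; this also buys you the additional equivalence of nefness of ${\bf H}_0$ with nefness of the bundle $\cat{V}$ itself. Your concluding step replaces the paper's appeal to the criterion that an effective divisor is nef if and only if the restriction of its associated line bundle to it is nef by a direct curve-splitting argument plus the standard fact that quotients of nef bundles are nef; both routes are sound. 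Two points you should make explicit to be complete: the canonical section vanishes to order exactly one along ${\bf H}_0$ (needed to get $\struct{\p(\cat{V})}({\bf H}_0) \cong \struct{\p(\cat{V})}(1)$ on the nose rather than up to multiplicity), and the restriction identity $\struct{\p(\cat{V})}(1)\vert_{{\bf H}_0} \cong \struct{\p(T\cat{M}(r,\alpha,\xi))}(1)$ used for curves inside ${\bf H}_0$, which amounts to the fibrewise isomorphism $\cat{V}(E_*)/H \cong T_{E_*}\cat{M}(r,\alpha,\xi)/\tilde{\theta}$ that the paper verifies explicitly.
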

\begin{proof}
Let $\cat{N}_{\p(\cat{V})/\bf{H}_0}$ denote the normal 
bundle of the divisor ${\bf H}_0 \subset \p (\cat{V})$,
where $$\Psi :\cat{V} \longrightarrow \cat{M}_{\xi}: = \cat{M}(r, \alpha, \xi)$$ is the vector bundle (see \eqref{eq:a59}) in Proposition \ref{prop:3}.

Recall that the effective divisor $\bf{H}_0$ is numerically effective if and only if the restriction of the line bundle $\struct{\p (\cat{V})}(\bf{H}_0)$ to $\bf{H}_0$ is numerically effective.
From Poincar\'e adjunction formula we have the following 
\begin{equation}
\label{eq:a60}
\struct{\p (\cat{V})}(\bf{H}_0) \vert_{\bf{H}_0} \cong \cat{N}_{\p(\cat{V})/\bf{H}_0}.
\end{equation}

 Therefore, $\bf{H}_0$ is numerically effective if and only if the normal bundle $\cat{N}_{\p(\cat{V})/\bf{H}_0}$
 is numerically effective. Recall that  the tangent bundle $T \cat{M}_\xi$ is numerically effective if and only if the tautological line bundle $\struct{\p (T \cat{M}_\xi)}(1)$ is numerically 
effective.

Thus, to prove the proposition it is enough to show that 
 the normal bundle  $\cat{N}_{\p(\cat{V})/\bf{H}_0}$ is canonically isomorphic to the tautological 
line bundle $\struct{\p (T \cat{M}_\xi)}(1)$. 
 
First we show that the divisor $\bf{H}_0$ is canonically 
isomorphic to projective bundle $\p (T \cat{M}_\xi)$.
Let 
\begin{equation*}
\label{eq:a61}
\tilde{\Psi} : \p (\cat{V}) \longrightarrow \cat{M}_{\xi}
\end{equation*}
be the projective bundle. Let $E_* \in \cat{M}_\xi$, and 
\begin{equation}
\label{eq:a62}
\theta \in \tilde{\Psi}^{-1}(E_*) \cap \bf{H}_0 \subset 
\p (\cat{V}).
\end{equation}
Then, $\theta$ represents a hyperplane in the fibre 
$\cat{V}(E_*) = \pi_{\xi}^{-1}(E_*)^{\vee}$ of the vector bundle $\cat{V}$, where $\pi_\xi$ is defined in 
\eqref{eq:a21.1}. Let $H_{\theta}$ denote this hyperplane represented by $\theta$.
Note that $H_{\theta} \subset \pi_{\xi}^{-1}(E_*)^{\vee}$ and $\pi_{\xi}^{-1}(E_*)$ is the affine space modelled 
over the vector space $\coh{0}
{X}{\Omega^1_X(S) \otimes \SParend{E_*}},$ therefore
for any $f \in H_{\theta}$, we have 
$$\text{d}f \in (T^*_{E_*} \cat{M}_\xi)^* = T_{E_*}\cat{M}_\xi.$$
Since $\theta \in \bf{H}_0$, and $H_\theta$ is a hyperplane, the subspace of $T_{E_*} \cat{M}_{\xi}$
generated by $\{\text{d}f\}_{f \in H_{\theta}}$ is a
hyperplane in  $T_{E_*} \cat{M}_{\xi}$. Let $\tilde{\theta} \in \p (T_{E_*} \cat{M}_{\xi})$ denote this hyperplane.
Thus, we get a canonical isomorphism 
\begin{equation}
\label{eq:a63}
{\bf H}_0 \cong \p (T \cat{M}_{\xi})
\end{equation}
by sending $\theta$ to $\tilde{\theta}$.

Next, note that the fibre $\cat{N}_{\p(\cat{V})/\bf{H}_0}(\theta)$ of the normal bundle $\cat{N}_{\p(\cat{V})/\bf{H}_0}$ is canonically isomorphic to the quotient 
$\cat{V}(E_*) / H_{\theta}$. 
Consider the morphism 
$$\cat{V}(E_*) \longrightarrow T_{E_*}\cat{M}_{\xi}$$
of vector spaces defined by sending $f \mapsto \text{d} f$.
Since image of the hyperplane $H_{\theta}$ is contained in $\tilde{\theta}$, we have well defined morphism on quotients 
$$\cat{V}(E_*)/ H_{\theta} \longrightarrow T_{E_*} \cat{M}_{\xi}/ \tilde{\theta},$$
which is an isomorphism.
Recall that the fibre of the tautological line bundle 
$\struct{\p (T \cat{M}_{\xi})}(1)$ at $E_*$ is canonically 
identified with the quotient $T_{E_*} \cat{M}_{\xi}/ \tilde{\theta}$.
This completes the proof.
\end{proof}

Now consider the moduli space $\cat{M}'_{pc}(r, d, \alpha)$,
and the smooth divisor 
$${\bf H} = \p (\cat{W}) \setminus \cat{M}'_{pc}(r, d, \alpha)$$
as defined in \eqref{eq:a28}. Then using the same steps as in Proposition \ref{prop:4}, we can show the following.

\begin{proposition}
\label{prop:5}
The smooth divisor $\bf{H}$ is numerically effective 
if and only if the tangent bundle $T \cat{M}(r, d, \alpha)$ is numerically effective.
\end{proposition}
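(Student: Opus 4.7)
The plan is to mirror the proof of Proposition \ref{prop:4} verbatim, with the vector bundle $\Psi : \cat{V} \to \cat{M}(r,\alpha,\xi)$ replaced by the vector bundle $\psi : \cat{W} \to \cat{M}(r,d,\alpha)$ constructed in the proof of Theorem \ref{thm:1}, and $T\cat{M}(r,\alpha,\xi)$ replaced by $T\cat{M}(r,d,\alpha)$. The divisor at infinity ${\bf H} = \p(\cat{W}) \setminus \cat{M}'_{pc}(r,d,\alpha)$ was defined in \eqref{eq:a28}, and as in the fixed-determinant case the argument proceeds by identifying the normal bundle of ${\bf H}$ in $\p(\cat{W})$ with the tautological line bundle on $\p(T\cat{M}(r,d,\alpha))$.

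First I would invoke Poincar\'e adjunction
\begin{equation*}
\struct{\p(\cat{W})}({\bf H})\vert_{{\bf H}} \cong \cat{N}_{\p(\cat{W})/{\bf H}},
\end{equation*}
so that the numerical effectiveness of the effective divisor ${\bf H}$ is equivalent to that of the normal bundle $\cat{N}_{\p(\cat{W})/{\bf H}}$. On the other hand, $T\cat{M}(r,d,\alpha)$ is numerically effective if and only if the tautological line bundle $\struct{\p(T\cat{M}(r,d,\alpha))}(1)$ is numerically effective. Thus the whole proposition reduces to producing a canonical isomorphism
\begin{equation*}
\cat{N}_{\p(\cat{W})/{\bf H}} \cong \struct{\p(T\cat{M}(r,d,\alpha))}(1).
\end{equation*}

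To build this, I would first identify ${\bf H}$ with $\p(T\cat{M}(r,d,\alpha))$. Given $E_* \in \cat{M}(r,d,\alpha)$ and $\theta \in \tilde{\psi}^{-1}(E_*)\cap {\bf H}$, the point $\theta$ represents a hyperplane $H_\theta \subset \cat{W}(E_*) = \pi^{-1}(E_*)^{\vee}$. Since $\pi^{-1}(E_*)$ is the affine space modelled on $\coh{0}{X}{\Omega^1_X(S)\otimes \SParENd{E_*}} \cong T^*_{E_*}\cat{M}(r,d,\alpha)$ by Lemma \ref{lem:1} and \eqref{eq:a21.2}, each affine-linear functional $f \in H_\theta$ has a linear part $\mathrm{d}f \in T_{E_*}\cat{M}(r,d,\alpha)$. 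The fact that $\theta$ lies at infinity precisely forces $\{\mathrm{d}f\}_{f\in H_\theta}$ to span a hyperplane $\tilde\theta \subset T_{E_*}\cat{M}(r,d,\alpha)$; the assignment $\theta \mapsto \tilde\theta$ is the desired canonical isomorphism ${\bf H} \cong \p(T\cat{M}(r,d,\alpha))$.

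Next, observe that the fiber $\cat{N}_{\p(\cat{W})/{\bf H}}(\theta)$ is canonically $\cat{W}(E_*)/H_\theta$. The fiberwise map $f \mapsto \mathrm{d}f$ assembles into a globally algebraic morphism $\cat{W} \to T\cat{M}(r,d,\alpha)$ of vector bundles over $\cat{M}(r,d,\alpha)$ (sending an affine-linear functional to its linear part), and sends $H_\theta$ into $\tilde\theta$; hence it descends to an isomorphism
\begin{equation*}
\cat{W}(E_*)/H_\theta \xrightarrow{\;\sim\;} T_{E_*}\cat{M}(r,d,\alpha)/\tilde\theta,
\end{equation*}
and the right-hand side is by definition the fiber of $\struct{\p(T\cat{M}(r,d,\alpha))}(1)$ at $\tilde\theta$. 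The main subtlety, as in Proposition \ref{prop:4}, is checking that the identification ${\bf H} \cong \p(T\cat{M}(r,d,\alpha))$ and the identification of normal bundles are genuinely algebraic (not just set-theoretic on fibers); this follows because both are constructed from the globally algebraic bundle map $\cat{W} \to T\cat{M}(r,d,\alpha)$. The conclusion then follows directly, exactly as in Proposition \ref{prop:4}.
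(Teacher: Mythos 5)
Your proposal is correct and matches the paper exactly: the paper itself proves Proposition \ref{prop:5} simply by asserting that the same steps as in Proposition \ref{prop:4} apply, and your argument carries out precisely those steps (Poincar\'e adjunction, the identification ${\bf H} \cong \p(T\cat{M}(r,d,\alpha))$ via $\theta \mapsto \tilde\theta$, and the identification of the normal bundle with the tautological line bundle), with the correct substitutions $\cat{W}$ for $\cat{V}$ and $\SParENd{E_*}$ for the trace-zero sheaf.
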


\section*{Acknowledgements} 
The author would like to thank the referee for 
comprehensive comments, pointing out some issues to be clarified in the first version of the paper and suggesting some appropriate references. 
The author is deeply grateful to Prof. Indranil Biswas
for useful discussions.

\end{document}